\def\fskip#1{}
\newtheorem{theorem}{Theorem}
\newtheorem{corollary}{Corollary}
\newtheorem{definition}{Definition}
\newtheorem{example}{Example}
\newtheorem{lemma}{Lemma}
\newcommand{\Rm}{\mathbb{R}^m}
\newcommand{\diag}{\mbox{diag}}
\newcommand{\M}{\mathscr{M}_2}
\newcommand{\Einf}{\mathcal{E}^{\infty}}
\newcommand{\tw}{\tilde{W}}
\def\1{{\bf 1}}
\def\E{\mathcal{E}}
\def\e{{\bf e}}
\def\F{\mathscr{F}}
\newcommand{\lra}{\leftrightarrow}
\newcommand{\lrA}{\Leftrightarrow}
\def\e{\epsilon}
\def\g{\gamma}
\newcommand{\al}{\alpha}
\def\R{\mathbb{R}}
\def\re{\mathbb{R}}
\newcommand{\bS}{\mathbb{S}}
\newcommand{\EXP}[1]{\mathsf{E}\!\left[#1\right] }
\newcommand{\prob}[1]{\mathsf{Pr}\left( #1 \right)}
\newcommand{\Ginf}{G^{\infty}}
\newcommand{\W}{\{W(k)\}}
\newcommand{\C}{\mathcal{C}}
\newcommand{\Fp}{\{F(k)\}}
\begin{document}
\title{On Approximations and Ergodicity Classes in Random Chains}
\author{Behrouz Touri and Angelia Nedi\'c\thanks{Department of Industrial 
and Enterprise Systems Engineering, University of
Illinois, Urbana, IL 61801, Email: \{touri1,angelia\}@illinois.edu. 
This research is supported by the National Science Foundation 
grant CMMI 07-42538.}
}
\maketitle
\begin{abstract}
We study the limiting behavior of a random dynamic system driven by 
a stochastic chain. Our main interest is in the chains that are not necessarily
ergodic but rather decomposable into ergodic classes. To investigate 
the conditions under which the ergodic classes of a model can be identified, 
we introduce and study an $\ell_1$-approximation and infinite flow graph of 
the model. We show that the $\ell_1$-approximations of random chains preserve 
certain limiting behavior. Using the $\ell_1$-approximations, we show how 
the connectivity of the infinite flow graph is related to the structure of 
the ergodic groups of the model. Our main result of this paper provides 
conditions under which the ergodicity groups of the model can be identified
by considering the connected components in the infinite flow graph. We provide 
two applications of our main result to random networks, namely broadcast 
over time-varying networks and networks with random link failure. 
\end{abstract}

\textit{keywords:} Ergodicity, ergodicity classes, infinite flow, 
product of random matrices.

\section{Introduction}
The dynamic systems driven by stochastic matrices 
have found their use in many problems arising 
in decentralized communication~\cite{Boyd06,CarliFagnani06,CarliFagnani10,
Aysal09}, decentralized control~\cite{Jadbabaie03,Olfati-Saber04,RenBeard05,
LuChenDiscrete,LuChenContinuous},
distributed optimization~\cite{Tsitsiklis84,Tsitsiklis86,Sundhar08c,Nedic09,
Johansson08a}, and information diffusion
in social networks~\cite{Golub2010,Lobel2010}. 
In many of these applications, the ergodicity plays a central role
in ensuring that the local ``agent'' information diffuses eventually 
over the entire network of agents. The conditions under which the 
ergodicity happens have been subject of some recent 
studies~\cite{Tahbaz-Salehi08,Tahbaz-Salehi09}. 
However, the limiting behavior of the dynamics driven by time-varying chains 
has not been studied much for the case when the ergodicity is absent. 
In this context, a notable work is~\cite{Lorenz} where the limiting behavior of
the Hegselmann-Krause model~\cite{Krause}  for opinion dynamics 
has been studied. The stability of this (deterministic) model has been 
established assuming certain conditions on the model. 
Studying the stability of this model is  important 
for understanding group formation in both deterministic and random time-varying
networks, such as multiple-leaders/multiple-followers networked systems.

The main objective of this paper is to investigate the limiting behavior of the
linear dynamics driven by random independent chains of 
stochastic matrices in the absence of ergodicity. Our goal is to study 
the conditions under which the ergodic groups are formed and to characterize 
these groups. To do so, we introduce an $\ell_1$-approximation and the infinite
flow graph of a random model, and we study the properties of these objects.
Using the established properties, we extend the main result of the previous 
work in~\cite{ErgodicityPaper} to a broader class of independent random models.
We then proceed to show that for certain random models, although 
the ergodicity might not happen, the dynamics of the model still 
converges and partial ergodicity happens almost surely. In other 
words, under certain conditions, \textit{ergodic groups} are formed and 
we characterize these groups through the connected components of the infinite
flow graph. We then apply the results to a broadcast-gossip 
algorithm over a time-varying network and to a time-varying 
network with random link failures. 

The work in this paper is related to the literature on ergodicity of
random models. A discussion on the ergodicity of deterministic 
(forward and backward) chains can be  found in~\cite{SenetaMatrices}.
The earliest occurrence of the study of random models dates back to the work 
of Rosenblatt~\cite{Rosenblatt}, where the algebraic and 
topological structure of the set of stochastic matrices is employed
to investigate the limiting behavior of the product of 
independent identically distributed (i.i.d.) random matrices.
Later, in~\cite{Nawrotzki1,Nawrotzki2,Cogburn}, such a product is studied 
extensively under a more general assumption of stationarity, and 
a necessary and sufficient condition for the ergodicity is developed.
In \cite{CarliZampieri07,Tahbaz-Salehi08}  the class of i.i.d.\ random models
with almost sure positive diagonal entries were studied. In particular, 
in~\cite{Tahbaz-Salehi08} it has been showed that such a random model 
is ergodic if and only if its expected chain is ergodic. Later, this result 
has been extended to the stationary ergodic models in~\cite{Tahbaz-Salehi09}. 

Unlike the work on the i.i.d.\ models or stationary processes
in~\cite{Rosenblatt,Nawrotzki1,Nawrotzki2,Zampieri,Tahbaz-Salehi08,
Tahbaz-Salehi09}, the work in this paper is on independent 
random models that are not necessarily independent. This work is a continuation
of our work in~\cite{ErgodicityPaper}, where 
for a class of independent random models, we showed that the ergodicity is 
equivalent to the connectivity of the \textit{infinite flow graph} of 
the random model or its expected model.
Furthermore, unlike the studies that provide conditions for ergodicity 
of deterministic or random chains, such 
as~\cite{SenetaCons,Tsitsiklis84,Jadbabaie03,Cao05,Zampieri,Tahbaz-Salehi08,
Tahbaz-Salehi09,ErgodicityPaper}, the work presented in this paper 
considers the limiting behavior of deterministic and random models that are 
not necessarily ergodic. 

The main contribution of this work is in the following aspects:~(1) 
The establishment of {\it conditions on random models under which 
the ergodicity classes are fully characterized}.
This result not only implies the stability of certain random dynamics,
but also provides the structure of their equilibrium points. 
The structure is revealed through the connectivity topology 
of the infinite flow graph of the model. Although the model is not ergodic, 
the ergodicity happens {\it locally for groups of indices}, which are 
characterized as the vertices in the same connected component of the infinite 
flow graph (Theorem~\ref{thrm:extendedinfflow}).~(2) The introduction and study
of some perturbations ($\ell_1$-approximations) of chains that preserve 
ergodicity classes (as seen in Theorem~\ref{lemma:approximation}).~(3) 
The introduction of random models (class $\M$) for which 
the ergodicity can be fully characterized by the infinite flow property
(Theorem~\ref{thm:infflowthmM2}). This class encircles many of the known 
ergodic deterministic and random models, as discussed in 
Section~\ref{sec:classM}.

The structure of this paper is as follows: in Section~\ref{sec:formulation}, 
we present the problem of our interest and motivate our work by considering 
the limiting behavior of gossip protocol extended to time-varying graphs.
We also define some ergodicity notions for later use. 
In Section~\ref{sec:approximation}, we define and 
investigate $\ell_1$-approximations of random models which play 
an important role later on. 
In Section~\ref{sec:classM}, we introduce and study the ergodicity
a class of random models, and we extend the infinite flow 
theorem of~\cite{ErgodicityPaper} to this larger class of models. 
In Section~\ref{sec:infiniteflow}, we study the stability of certain
random models and characterize their ergodicity classes.
These classes are identified using infinite flow graph concept
and analysis that combines the results of 
Sections~\ref{sec:approximation} and~\ref{sec:classM},
In Section~\ref{sec:applications}, we apply
our main result to two different random models, and
we conclude in Section~\ref{sec:conclusion}.

\noindent {\bf Notation and Basic Terminology.} 
We view all vectors as columns. For a vector $x$, we write $x_i$
to denote its $i$th entry, and we write $x\ge0$ ($x>0$) to denote that 
all its entries are nonnegative (positive). 
We use $x^T$ for the transpose of a vector $x$. For a vector $x\in\Rm$, 
we use $\|x\|_p=(\sum_{i=1}^{m}|x_i|^p)^{1/p}$ for $p\geq 1$ and 
$\|x\|$ when $p=2$. For a matrix $A$, we write $\|A\|_p$ to denote the matrix 
norm induced by $\| \|_p$ vector norm. 
We use $e_i$ to denote the vector with the $i$th entry equal to~1 and 
all other entries equal to~$0$. We write 
$e$ to denote the vector with all entries equal to~$1$. 
We write $\{x(k)\}$ to denote a sequence $x(0),x(1),\ldots$ of some elements,
and we write $\{x(k)\}_{k\ge t}$ to denote the truncated sequence 
$x(t),x(t+1),\ldots$ for $t>0$.
For a given set $C$ and a subset $S$ of $C$, we write $S\subset C$ 
when $S$ is a proper subset of~$C$. A set $S\subset C$ with 
$S\ne\emptyset$ is a \textit{nontrivial} subset of $C$. 
We use $[m]$ for the integer set $\{1,\ldots,m\}$. We let
$\bar S$ denote the complement of a given set $S\subseteq[m]$
with respect to $[m]$.

We denote the identity matrix by $I$. For a finite 
collection $A_1,\ldots,A_{\tau}$ of square matrices, we write 
$A=\diag(A_1,\ldots,A_{\tau})$ to denote the block diagonal matrix with 
$r$th diagonal block being $A_r$ for $1\le r\le\tau$. For a matrix $W$, 
we write  $W_{ij}$ to denote its $(i,j)$th entry, 
$W^i$ to denote its $i$th column vector, and $W^T$ to denote its transpose.
For an $m\times m$ matrix $W$, we let 
$\sum_{i<j}W_{ij}=\sum_{i=1}^{m-1}\sum_{j={i+1}}^{m}W_{ij}$. For such a matrix 
and a nontrivial subset $S\subset[m]$, we define 
$W_S=\sum_{i\in S,j\in \bar{S}}(W_{ij}+W_{ji})$. 
A vector $v\in \Rm$ is stochastic if $v\geq 0$ and $\sum_{i=1}^{m}v_i=1$. 
A matrix $W$ is {\it stochastic} when all its rows are 
stochastic, and it is {\it doubly stochastic} when both $W$ and $W^T$ are 
stochastic. We let $\bS^m$ denote the set of $m\times m$ stochastic matrices.
We refer to a sequence $\W$ of matrices  as {\it model} or {\it chain} 
interchangeably.
 
We write $\EXP{X}$ to denote the expected value of a random variable $X$.
For an event $\mathscr{A}$, we use $\prob{\mathscr{A}}$ to denote 
its probability. For a given probability space, we say that a 
\textit{statement $R$} holds almost surely if the set of realizations
for which \textit{the statement $R$} holds is an event and 
$\prob{\{\omega\mid \mbox{statement $R$ holds}\}}=1$. 
We often abbreviate ``almost surely'' by {\it a.s.} 

\section{Problem Formulation and Motivation}\label{sec:formulation}
In this section, we describe the problems of interest and introduce 
some background concepts. We also provide an example that motivates 
the further development.

\subsection{Problem of Interest and Basic Concepts}\label{sec:problem}
We consider a linear dynamic system given by 
\begin{align}\label{eqn:dynsys}
x(k+1)=W(k)x(k)\qquad\mbox{for $k\geq t_0$},
\end{align}
where $\{W(k)\}$ is a random stochastic chain, $t_0$ is an initial time and 
$x(t_0)\in\re^m$ is an initial state of the system. 
It is well known that, for an ergodic random chain $\W$, the dynamics 
in~\eqref{eqn:dynsys} is convergent almost surely for any initial time $t_0$ 
and any initial state $x(t_0)$ (see~\cite{SenetaCons}). Furthermore, 
the limiting value of each coordinate $x_i(k)$ is the same, which is often 
referred to as consensus, agreement, or synchronization.  In this case, 
the sequence $\W$ has a single ergodic class that consists of all coordinate 
indices $\{1,\ldots,m\}$. In other words,  the dynamics in~\eqref{eqn:dynsys} 
is stable and the equilibrium points lie on the line spanned by the vector $e$.
 
A natural question arises: what happens if $\W$ is not ergodic?
In particular, what can we say
about the limiting dynamics of the coordinates $x_i(t)$? 
What can we say about the stability and the equilibrium points of the dynamic 
system~\eqref{eqn:dynsys}? Can we determine the ergodicity classes based on 
the properties of the matrices $W(k)$? Our motivation in this paper is to 
answer these questions. To do this, we first investigate these questions for a 
more structured dynamic, namely the \textit{gossip algorithm on a time-varying 
network}. Then, we show that the results are in fact 
applicable  to a more general class of random dynamics. 

We next formalize several notions related to random chains. 
Let $(\Omega,\mathcal{F},\prob{\cdot})$ be a probability space and 
let $\mathbb{Z}^+$ be the set of non-negative 
integers. Let $W:\Omega\times \mathbb{Z}^+\rightarrow \bS^m$ be a 
random matrix process such that $W_{ij}(k)$ is a Borel-measurable function for 
all $i,j\in[m]$ and $k\geq 0$. We refer to such a process as 
a \textit{random chain} or a \textit{random model}. We often denote such 
a process by its coordinate representation $\W$. If matrices $W(k)$ are 
independent, the model is independent. In addition, if $W(k)$s 
are identically distributed, $\W$ is an 
\textit{independent identically distributed (i.i.d.)} random model.

We now introduce the concept of ergodicity. We first define it for 
a deterministic chain $\{A(k)\}$, which can be viewed 
as a special independent random chain by setting 
$\Omega=\{\omega\}$, $\mathcal{F}=\{\{\omega\},
\emptyset\}$, $\prob{\{\omega\}}=1$ and $W(k)(\omega)=A(k)$. Then, 
the dynamic system in Eq.~\eqref{eqn:dynsys} is deterministic and
we have the following definition.
\begin{definition}\label{def:ergodicity}
A chain $\{A(k)\}$ is an \textit{ergodic chain} or 
an \textit{ergodic model} if $\lim_{k\rightarrow\infty}(x_i(k)-x_j(k))=0$ 
for any $i,j\in[m]$, any starting time $t_0\geq 0$ and any
starting point $x(t_0)\in\Rm$. The chain 
\textit{admits consensus} if the above assertion is true for $t_0=0$. 
\end{definition}

Note that, for a random model $\W$, 
we can speak about subsets  $\E$ and $\C$ of $\Omega$ on 
which the ergodicity and consensus happen, respectively. These sets 
are given by
\[\E=\cap_{t_0=0}^{\infty}\left(\cap_{\ell=1}^{m}\{\omega\mid 
\lim_{k\rightarrow\infty}(x_i(k)-x_j(k))=0\ \ 
\mbox{for all $i,j\in[m]$, $x(t_0)=e_\ell$}\} \right),\] 
\[\C=\cap_{\ell=1}^{m}\{\omega\mid 
\lim_{k\rightarrow\infty}(x_i(k)-x_j(k))=0\ \ 
\mbox{for all $i,j\in[m],x(0)=e_\ell$}\}.\] 
The scalars $x_i(k)-x_j(k)$ are random variables since $W_{ij}(k)$ are 
Borel-measurable, so that  $\E$ and $\C$ are events (see~\cite{ErgodicityPaper}
for a discussion on why it suffices to consider only $x(0)=e_\ell$).
The random model is  {\it ergodic} ({\it admits consensus})
if the event $\E$ ($\C$) happens almost surely.

The ergodicity of certain random models is closely related to the 
\textit{infinite flow property}, as shown in~\cite{ErgodicityPaper,CDC2010}.
We will use this property, so we recall its definition below.

\begin{definition}\label{def:inflowprop}(\textit{Infinite Flow Property}) 
A deterministic chain $\{A(k)\}$ has infinite flow property 
if $\sum_{k=0}^{\infty}A_S(k) =\infty$ for any nonempty $S\subset [m]$. 
A random model $\W$ has infinite flow property if it has 
infinite flow property almost surely.
\end{definition}

As in the case of consensus and ergodicity events, 
the subset of $\Omega$ over which the infinite flow happens is an event 
since $W_{ij}(k)$s are Borel-measurable. We denote this event by $\F$. 

In our further development, we also use some additional properties of 
random models such as weak feedback property and a common steady state in 
expectation, as introduced in~\cite{ErgodicityPaper}. For convenience, 
we provide them in the following definition.

\begin{definition}\label{def:feedbacksteadystate}
Let $\W$ be a random model. We say that the model has:
\begin{enumerate}[(a)]
\item  \textit{Weak feedback property} 
if there exists $\gamma>0$ such that \\
\centerline{
$\EXP{W^i(k)^TW^j(k)}\geq \gamma(\EXP{W_{ij}(k)}+\EXP{W_{ji}(k)})
\qquad\hbox{for all $k\geq 0$ and $i\not=j$, $i,j\in[m]$}.$} 
\item 
A \textit{common steady state $\pi$ in expectation} 
if $\pi^T\EXP{W(k)}=\pi^T$ for all $k\geq 0$.
\end{enumerate}
\end{definition}
Any random model with $W_{ii}(k)\geq \gamma>0$ 
almost surely for all $k\geq 0$ and $i\in [m]$ has weak feedback property. 
Also, the i.i.d.~models with almost sure positive diagonal entries have weak 
feedback property, as seen in~\cite{ErgodicityPaper}. 
As an example of a model with a common steady state $\pi$ in expectation, 
consider any i.i.d.\ random model $\W$. Another example
is a model where each $W(k)$ is doubly stochastic almost 
surely, for which we have $\pi=\frac{1}{m}\,e$. 

With a given random model, we associate an undirected graph which 
we refer to as {\it infinite flow graph}. We define this graph as 
a simple undirected graph with links that have sufficient information flow
(simple graph is a graph without self-loops and multiple edges).  

\begin{definition}\label{def:infiniteflowgraph}(\textit{Infinite Flow Graph}) 
The infinite flow graph of a random model $\W$ is 
the graph $\Ginf=([m],\Einf)$, where $\{i,j\}\in\Einf$ 
if and only if $\sum_{k=0}^{\infty}(W_{ij}(k)+W_{ji}(k))=\infty$ 
almost surely. 
\end{definition}  

The infinite flow graph has been (silently) used  in~\cite{Tsitsiklis84}
mainly to establish the ergodicity of a certain deterministic chains. 
Here, however, we make use of this graph to establish ergodicity classes for 
certain independent random chains. In particular, as we will see in the 
later sections, the infinite flow graph and its connected components play 
important role in identifying the ergodicity classes of the model. 
In the sequel, {\it a connected component of a graph will always be maximal} 
with respect to the set inclusion, i.e., it will be 
the connected subgraph of the given graph that is not 
properly contained in any other connected subgraph.

\subsection{Infinite Flow Graph and
Gossip Algorithm on Time-Varying Network}\label{sec:gossip}
To illustrate the use of the infinite flow graph in determining the
ergodicity classes of a random model, we consider a gossip algorithm over a 
time-varying network that is discussed in~\cite{ErgodicityPaper,CDC2010} 
as an extension of the original gossip algorithm \cite{Boyd05,Boyd06}. 
We investigate the algorithm for the case when the underlying network is not
connected. We assume that there are $m$ agents
that communicate over an undirected graph $G=([m],E)$ with link set $E$.
The agent communication at any time $k$ is determined by a matrix
$P(k)$ such that $\sum_{i<j}P_{ij}(k)=1$. 
The value $P_{ij}(k)$ is the probability that 
the link $\{i,j\}\in E$ is activated, independently of the link realizations in
the past, and $P_{ij}(k)=0$ if $\{i,j\}\not\in E$.
When the link $\{i,j\}$ is activated at time $k$, agents $i$ and $j$ 
exchange their values and update as follows:
\begin{align*}
x_\ell(k+1) & =\frac{1}{2}\,\left(x_i(k)+ x_j(k)\right)
\qquad\hbox{for }\ell=i,j,
\end{align*} 
while the other agents do not update, i.e.,
$x_\ell(k+1)=x_\ell(k)$ for $\ell\ne i,j$. Here, $x_i(0)$ is the initial value 
of agent $i$. Thus, we have a dynamic model of the form~\eqref{eqn:dynsys}, 
where
\begin{align}\label{eqn:extendedgossip}
W(k)=I-\frac{1}{2}(e_i-e_j)(e_i-e_j)^T\qquad
\mbox{with probability $P_{ij}(k)$.}
\end{align}
By looking at the connected components of the infinite flow graph of
the model $\W$, we can characterize the limiting behavior
of the dynamics $\{x(k)\}$. Specifically, let 
$\Ginf$ be the infinite flow graph of the model.
Let $S_1,\ldots,S_{\tau}\subset [m]$ be the connected components
of $\Ginf$, where $\tau\ge1$ is the number of 
the components. We have the following result.

\begin{theorem}\label{thm:gossip}
Consider the time-varying gossip algorithm given by~\eqref{eqn:extendedgossip}.
Then, for any initial vector $x(0)\in\Rm$, the dynamics $\{x(k)\}$ 
converges almost surely. Furthermore, we have 
$\lim_{k\rightarrow\infty}(x_i(k)-x_j(k))=0$ almost surely for $i,j\in S_r$ 
and $r\in [\tau]$.
\end{theorem}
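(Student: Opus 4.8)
The plan is to combine two structural features of the gossip chain---double stochasticity and independence across time---with the finite/infinite flow dichotomy encoded in $\Ginf$, thereby reducing the theorem to the ergodicity of the chain restricted to each connected component $S_r$. First I would record the elementary properties of the matrices in~\eqref{eqn:extendedgossip}: each realization $W(k)=I-\frac12(e_i-e_j)(e_i-e_j)^T$ is symmetric and doubly stochastic with diagonal entries in $\{\frac12,1\}$. Hence $\W$ is an independent model that has the weak feedback property (with $\gamma=\frac12$) and the common steady state $\pi=\frac1m e$ in expectation, and double stochasticity gives $e^Tx(k+1)=e^Tx(k)$, so the coordinate sum is conserved along the dynamics.

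Next I would identify the edges of $\Ginf$. For $i\neq j$ one has $W_{ij}(k)+W_{ji}(k)=1$ exactly when the link $\{i,j\}$ is activated at time $k$ and $=0$ otherwise, so $\sum_{k}(W_{ij}(k)+W_{ji}(k))$ counts the activations of $\{i,j\}$. Since these activations are independent over $k$, Kolmogorov's $0$--$1$ law together with the Borel--Cantelli lemmas shows the count is a.s.\ infinite when $\sum_k P_{ij}(k)=\infty$ and a.s.\ finite when $\sum_k P_{ij}(k)<\infty$; thus $\{i,j\}\in\Einf$ iff $\sum_k P_{ij}(k)=\infty$. In particular, any link joining two distinct components $S_r,S_{r'}$ cannot lie in $\Einf$, hence is activated only finitely often a.s.

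The central step is a decoupling argument. Because there are finitely many cross-component links and each is activated finitely often a.s., there is an a.s.\ finite random time $T$ after which no cross-component link is ever activated. For $k\ge T$ the update mixes coordinates only within a single component, so the subvector $x_{S_r}(k)$ evolves autonomously under the $|S_r|\times|S_r|$ submatrices $\tilde W^{(r)}(k)$ of $W(k)$, which are doubly stochastic for $k\ge T$. Replacing each cross-component activation by the identity produces a clean independent chain $\{\hat W^{(r)}(k)\}$ on $S_r$ that agrees with $\{\tilde W^{(r)}(k)\}$ for all $k\ge T$; it is doubly stochastic, has weak feedback and common steady state $\frac{1}{|S_r|}e$, and---because $S_r$ is a connected component of $\Ginf$, so every cut of $S_r$ is crossed by a within-$S_r$ edge of $\Einf$ carrying infinite flow---has the infinite flow property on $S_r$. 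By the infinite flow theorem of~\cite{ErgodicityPaper} this chain is ergodic, and since ergodicity (Definition~\ref{def:ergodicity}) holds simultaneously for \emph{every} starting time a.s., the product $\tilde W^{(r)}(k-1)\cdots\tilde W^{(r)}(T)=\hat W^{(r)}(k-1)\cdots\hat W^{(r)}(T)$ converges a.s.\ to the $|S_r|$-dimensional averaging matrix $\frac{1}{|S_r|}ee^T$, even though $T$ is random.

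Finally I would assemble the conclusion: the above gives $x_i(k)\to c_r$ for all $i\in S_r$, where $c_r=\frac{1}{|S_r|}\sum_{i\in S_r}x_i(T)$, so $\{x(k)\}$ converges a.s.\ and $\lim_{k\to\infty}(x_i(k)-x_j(k))=0$ for all $i,j\in S_r$ and $r\in[\tau]$. I expect the main obstacle to be the decoupling step, specifically (i) passing from the definition of $\Einf$ to the a.s.\ finiteness of cross-component activations via Borel--Cantelli, and (ii) handling the randomness of $T$ when invoking ergodicity of the per-component chain---which is precisely why the ``for every starting time'' formulation of ergodicity is needed, rather than a fixed-time statement.
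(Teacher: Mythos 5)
Your proposal is correct and follows essentially the same route as the paper: Borel--Cantelli gives an a.s.\ finite random time after which the dynamics decouples along the connected components of $\Ginf$, each per-component chain is doubly stochastic with diagonal entries at least $\frac12$ and has the infinite flow property, and the infinite flow machinery then yields ergodicity within each component, with the randomness of the decoupling time absorbed by the ``all starting times'' formulation of ergodicity. The only minor difference is technical bookkeeping: you apply the expectation-form infinite flow theorem (Theorem~\ref{thm:infflowthm}) to a cleaned independent chain $\{\hat W^{(r)}(k)\}$ obtained by replacing cross-component activations with $I$, whereas the paper applies the pathwise Corollary~\ref{cor:pathproperty} directly to the truncated realizations $\{W^{(r)}(k)(\omega)\}_{k\geq N(\omega)}$; your explicit identification of the limit as the in-component average via double stochasticity actually spells out the convergence claim more carefully than the paper's own proof does.
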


The proof of Theorem~\ref{thm:gossip} relies on the infinite flow theorem, 
which was established in~\cite{ErgodicityPaper}
and it is provided below for easier reference. We use the theorem to
derive a special consequential result and, then, we proceed with the proof
of Theorem~\ref{thm:gossip}.

\begin{theorem}\label{thm:infflowthm} ({\it Infinite Flow Theorem}) \ 
Let a random model $\{W(k)\}$ be independent, and 
have a common steady state $\pi>0$ in expectation and
weak feedback property. 
Then, the following conditions are equivalent: 
\begin{enumerate}
\item[(a)] The model is ergodic.
\item[(b)] The model has infinite flow property.
\item[(c)] The expected model has infinite flow property. 
\item[(d)] The expected model is ergodic.
\end{enumerate}
\end{theorem}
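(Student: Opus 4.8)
The plan is to establish the four-way equivalence by closing the single cycle $(a)\Rightarrow(d)\Rightarrow(c)\Rightarrow(b)\Rightarrow(a)$; the advantage of this route is that only $(b)\Rightarrow(a)$ carries real analytic weight. Throughout I normalize $\pi$ to a probability vector (legitimate since $\pi>0$ and $\pi^T\EXP{W(k)}=\pi^T$ is scale invariant), write $\pi_{\min}=\min_i\pi_i>0$, and use two standing facts: row-stochasticity gives $\|x(k)\|_\infty\le\|x(t_0)\|_\infty$, so trajectories are bounded, and $x(k)$ is measurable with respect to $W(t_0),\dots,W(k-1)$, hence independent of $W(k)$. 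For $(a)\Rightarrow(d)$ I would exploit exactly this independence: taking expectations in $x(k+1)=W(k)x(k)$ gives $\EXP{x(k+1)}=\EXP{W(k)}\EXP{x(k)}$, so $y(k):=\EXP{x(k)}$ is the trajectory of the expected chain from the same data. If the model is ergodic then $x_i(k)-x_j(k)\to0$ a.s.\ and boundedly, so bounded convergence yields $y_i(k)-y_j(k)=\EXP{x_i(k)-x_j(k)}\to0$ for all $i,j$, all $t_0$ and all $x(t_0)$, i.e.\ the expected chain is ergodic.

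For $(d)\Rightarrow(c)$ I would prove the elementary direction ``ergodic implies infinite flow'' for the deterministic chain $A(k):=\EXP{W(k)}$, by contraposition. If a cut $S$ has $\sum_kA_S(k)<\infty$, then a short computation using $\pi^TA(k)=\pi^T$ shows the $\pi$-mass $g_S(k)=\sum_{i\in S}\pi_ix_i(k)$ obeys $|g_S(k{+}1)-g_S(k)|\le\pi_{\max}A_S(k)\|x(t_0)\|_\infty$, so it has summable variation; starting at a late $t_0$ from the $0/1$ indicator of $\bar S$ keeps $g_S(k)$ near $0$ forever, contradicting the positive consensus value that ergodicity would force. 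For $(c)\Rightarrow(b)$ I would fix a cut $S$ and observe that $\{W_S(k)\}_k$ are independent, nonnegative and uniformly bounded with $\sum_k\EXP{W_S(k)}=\sum_kA_S(k)=\infty$; Kolmogorov's three-series theorem then rules out a.s.\ convergence of $\sum_kW_S(k)$ (its mean series already diverges), and since the summands are nonnegative this forces $\sum_kW_S(k)=\infty$ a.s., a union over the finitely many cuts giving the infinite flow property. (The reverse $(b)\Rightarrow(c)$, not needed here, is immediate from monotone convergence.)

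The core is $(b)\Rightarrow(a)$, for which I would take the quadratic comparison function $q(k)=\sum_i\pi_ix_i^2(k)$. Expanding each row with the identity $(\sum_iw_ix_i)^2=\sum_iw_ix_i^2-\sum_{i<j}w_iw_j(x_i-x_j)^2$ and using $\EXP{\sum_\ell\pi_\ell W_{\ell i}(k)}=\pi_i$ yields the exact relation
\[
\EXP{q(k{+}1)\mid\mathcal F_k}=q(k)-\sum_{i<j}c_{ij}(k)\bigl(x_i(k)-x_j(k)\bigr)^2,\qquad c_{ij}(k)=\EXP{\textstyle\sum_\ell\pi_\ell W_{\ell i}(k)W_{\ell j}(k)}.
\]
Here the hypotheses enter: $\pi>0$ gives $c_{ij}(k)\ge\pi_{\min}\EXP{W^i(k)^TW^j(k)}$, and weak feedback upgrades this to $c_{ij}(k)\ge\beta(\EXP{W_{ij}(k)}+\EXP{W_{ji}(k)})$ with $\beta=\pi_{\min}\gamma>0$. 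Thus $q(k)$ is a bounded nonnegative supermartingale; it converges a.s., and the total increase of its predictable compensator is a.s.\ finite, giving, after absorbing $\beta$, that $\sum_k\sum_{i<j}d_{ij}(k)(x_i(k)-x_j(k))^2<\infty$ a.s., where $d_{ij}(k)=\EXP{W_{ij}(k)+W_{ji}(k)}$ are the symmetrized entries of the expected chain. Since $(b)\Leftrightarrow(c)$ makes $\Ginf$ connected, i.e.\ $\sum_kd_{ij}(k)=\infty$ along the edges of a spanning tree, this should force $x_i(k)-x_j(k)\to0$ a.s., which is ergodicity.

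I expect this last conversion to be the main obstacle, because of a timing mismatch: on an edge $\{i,j\}$ of $\Ginf$ we know $\sum_kd_{ij}(k)=\infty$ and $\sum_kd_{ij}(k)(x_i(k)-x_j(k))^2<\infty$, yet a gap $(x_i(k)-x_j(k))^2\ge\rho>0$ occurring only on a sparse set of times is consistent with both whenever the flow $d_{ij}(k)$ avoids those times. To rule this out I would first show, from $(x_i(k{+}1)-x_i(k))^2\le\sum_jW_{ij}(k)(x_i(k)-x_j(k))^2$ together with the weak-feedback bound, that each coordinate has square-summable increments $\sum_k(x_i(k{+}1)-x_i(k))^2<\infty$ a.s.; the trajectory then varies slowly, so a gap present at one time persists over an entire window and the edge flow accumulated over that window is summed against a uniformly positive $(x_i-x_j)^2$. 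Propagating this along a spanning tree of $\Ginf$ contradicts $v(k)=q(k)-(\pi^Tx(k))^2\to v_\infty>0$. This slow-variation argument is the delicate step where the full strength of the infinite flow property is consumed.
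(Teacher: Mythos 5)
Your cycle $(a)\Rightarrow(d)\Rightarrow(c)\Rightarrow(b)\Rightarrow(a)$ is sound in architecture, and the three ``easy'' legs are correct: $(a)\Rightarrow(d)$ by independence of $W(k)$ from $x(k)$ plus dominated convergence, $(d)\Rightarrow(c)$ by your cut-mass contraposition (which legitimately uses that ergodicity is demanded from \emph{every} starting time $t_0$, together with $\pi^T\EXP{W(k)}=\pi^T$), and $(c)\Rightarrow(b)$ by the three-series theorem --- the very argument the paper itself uses inside Theorem~\ref{thrm:extendedinfflow}. Your supermartingale identity for $q(k)=\sum_i\pi_i x_i^2(k)$ is also correct and is the same mechanism the paper relies on (the class-$\M$ bound inherited from Theorem~5 of \cite{ErgodicityPaper}, combined with weak feedback exactly as in Theorem~\ref{thm:sufficient}), yielding $\sum_k\sum_{i<j}d_{ij}(k)(x_i(k)-x_j(k))^2<\infty$ a.s.\ with $d_{ij}(k)=\EXP{W_{ij}(k)}+\EXP{W_{ji}(k)}$.

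The genuine gap is in the crux step $(b)\Rightarrow(a)$, precisely where the paper does \emph{not} argue from scratch but invokes Lemma~3 of \cite{CDC2010}. Your slow-variation/window argument cannot close this step. The three facts you extract --- (i) $\sum_k(x_i(k{+}1)-x_i(k))^2<\infty$, (ii) $\sum_k d_{ij}(k)(x_i(k)-x_j(k))^2<\infty$, (iii) $\sum_k d_{ij}(k)=\infty$ --- are mutually consistent with $|x_i(k)-x_j(k)|$ returning to a fixed level $\rho>0$ infinitely often. Concretely: let $d_{ij}(k)=n^{-4}$ on consecutive blocks of length $n^4$ (each block carries unit flow, so (iii) holds), and let the gap be zero except for a tent of height $\rho$ and width $n^2$ inside the $n$th block; then the squared-increment sum is $\sum_n O(n^{-2})$ and the flow-weighted sum is at most $\sum_n \rho^2 n^2\cdot n^{-4}=O(n^{-2})$, both finite, yet consensus fails. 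The defect in your argument is quantitative: square-summable increments give persistence windows of length $L(k_0)\to\infty$ at a rate governed by the uncontrolled tail $\sum_{k\ge k_0}(x_i(k{+}1)-x_i(k))^2$, while the infinite flow property puts no lower bound on the flow landing inside those particular windows --- the flow may accumulate only at times when the gap is small. Any correct proof must use the dynamics pathwise beyond (i)--(iii): e.g., that $\max_i x_i(k)$ and $\min_i x_i(k)$ are monotone, and that every ``pumping up'' of a gap costs a fixed amount of the finite realized budget $\sum_k\sum_{i<j}(W_{ij}(k)+W_{ji}(k))(x_i(k)-x_j(k))^2$ (note: realized entries, obtained via monotone convergence as in Theorem~\ref{thm:sufficient}, not just expected ones, since pumping is driven by realized weights). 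That cut/sorting argument is exactly the content of the cited lemma, and it is the piece your proposal is missing.
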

 
When the conditions of Theorem~\ref{thm:infflowthm} are satisfied 
in almost sure sense, i.e., they hold for almost all sample paths of the random
dynamics, we obtain the following result.

\begin{corollary}\label{cor:pathproperty}
Let $\W$ be a random model. Suppose that $\pi^T W(k)=\pi^T$ almost surely for 
a vector $\pi>0$ and all $k\geq 0$. Also, suppose that 
$W^i(k)^TW^j(k)\geq \gamma(W_{ij}(k)+W_{ji}(k))$ almost surely for 
some $\g>0$ and for all $k\geq 0$ and $i,j\in[m]$ with $i\not=j$. 
Then, the infinite flow and the ergodicity events coincide almost surely, 
i.e., $\E=\F$ almost surely.
\end{corollary}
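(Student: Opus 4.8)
The plan is to apply the Infinite Flow Theorem (Theorem~\ref{thm:infflowthm}) not to the model $\W$ as a whole, but separately to each of its deterministic realizations. The reason for this detour is crucial: the corollary does \emph{not} assume $\W$ to be independent, so Theorem~\ref{thm:infflowthm} cannot be invoked directly on $\W$. However, any single deterministic chain, regarded as a degenerate random model over a one-point probability space (as described just before Definition~\ref{def:ergodicity}), is automatically independent. Moreover, the two hypotheses of the corollary are exactly the \emph{pathwise} (almost sure) analogues of the common-steady-state-in-expectation and weak-feedback conditions, and on a one-point space expectation reduces to evaluation; so these pathwise hypotheses are precisely what the theorem demands of each realization.

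First I would isolate a probability-one event on which everything is well behaved. For each $k\ge0$ the equality $\pi^TW(k)=\pi^T$ holds a.s., and for each $k\ge0$ and each pair $i\ne j$ the inequality $W^i(k)^TW^j(k)\ge\gamma(W_{ij}(k)+W_{ji}(k))$ holds a.s. Intersecting these countably many probability-one events yields an event $\mathcal{G}$ with $\prob{\mathcal{G}}=1$ on which both conditions hold simultaneously for every $k\ge0$ and every $i\ne j$; measurability of $\mathcal{G}$ is guaranteed by the Borel-measurability of the entries $W_{ij}(k)$ noted earlier in the text.

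Next I would fix an arbitrary $\omega\in\mathcal{G}$ and view the deterministic chain $\{W(k)(\omega)\}$ as a random model on the one-point space. This degenerate model is trivially independent; its expected chain is the chain itself, so the common-steady-state condition collapses to $\pi^TW(k)(\omega)=\pi^T$ and the weak-feedback condition collapses to the pathwise inequality above, both of which hold because $\omega\in\mathcal{G}$. Hence all hypotheses of Theorem~\ref{thm:infflowthm} are met for this realization, and the equivalence of its parts (a) and (b) gives that $\{W(k)(\omega)\}$ is ergodic if and only if it has the infinite flow property. By the definitions of $\E$ (via Definition~\ref{def:ergodicity}) and $\F$ (via Definition~\ref{def:inflowprop}), this says exactly that $\omega\in\E$ if and only if $\omega\in\F$.

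Since this holds for every $\omega\in\mathcal{G}$, the events $\E$ and $\F$ agree on $\mathcal{G}$; as $\prob{\mathcal{G}}=1$, the symmetric difference $(\E\setminus\F)\cup(\F\setminus\E)$ is contained in the null set $\Omega\setminus\mathcal{G}$, whence $\E=\F$ almost surely. The only genuinely delicate point is this passage from the random model to its realizations: one must verify that the expectation-based conditions of Theorem~\ref{thm:infflowthm} indeed collapse to their pathwise forms on a one-point space and that independence holds there for free. The remaining steps---the countable intersection defining $\mathcal{G}$ and the measurability of $\E$ and $\F$---are routine given the setup already established.
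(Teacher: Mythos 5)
Your proposal is correct and takes essentially the same route as the paper, which presents the corollary as an immediate pathwise consequence of Theorem~\ref{thm:infflowthm} (``when the conditions \dots\ hold for almost all sample paths''): your countable intersection $\mathcal{G}$, the observation that a single realization is a trivially independent model on a one-point space where expectation collapses to evaluation, and the conclusion $\E\cap\mathcal{G}=\F\cap\mathcal{G}$ are exactly the details the paper leaves implicit. No gaps.
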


We are now ready to present the proof of Theorem~\ref{thm:gossip}.

\begin{proof}
Let $m_r=|S_r|$ be the number of vertices in the connected component $S_r$ of 
$\Ginf$. Without loss of generality assume that the vertices are ordered 
such that $S_1=\{1,\ldots,a_1\},$ $S_2=\{a_1+1,\ldots,a_2\},\ldots,S_{\tau}
=\{a_{\tau-1}+1,\ldots,a_{\tau}\}$ where $m_r=a_r-a_{r-1}$ and 
$a_0=0<a_1<a_2<\cdots<a_{\tau-1}<a_{\tau}=m$. By the Borel-Cantelli 
lemma~\cite{Durrett}, page 46, for almost 
all $\omega\in \Omega$ there exists $N(\omega)<\infty$ such that 
no communication link $\{i,j\}$ will appear between two different components 
$S_{\al}$ and $S_{\beta}$ for $\al\not=\beta$ and $\al,\beta\in [\tau]$ for any
time $k\geq N(\omega)$. Therefore, for almost all $\omega\in \Omega$, the chain
$\{W(k)\}(\omega)$ can be written in the form $W(k)(\omega)
=\diag(W^{(1)}(k)(\omega),\ldots,W^{(\tau)}(k)(\omega))$ for 
$k\geq N(\omega)$ where $W^{(r)}(k)(\omega)$ are $m_r\times m_r$ stochastic 
matrices. From the dynamic system perspective, this means that 
after time $N(\omega)$, the dynamics $\{x(k)\}$ driven by 
$\{W(k)\}$ can be decoupled into $\tau$ disjoint dynamics 
$\{x^{(r)}(k)\}$, where $\{x^{(r)}(k)\}\subset\mathbb{R}^{m_r}$ 
is governed by $\{W^{(r)}(k)(\omega)\}$. By the Borel-Cantelli 
lemma~\cite{Durrett}, page 46, the models 
$\{W^{(r)}(k)(\omega)\}_{k\geq N(\omega)}$ have infinite flow property 
for $r\in[\tau]$. Now, for every $\omega$ and 
each $r\in[\tau]$, the deterministic model 
$\{W^{(r)}(k)(\omega)\}_{k\geq N(\omega)}$ is doubly stochastic and 
$W^{(r)}_{ii}(k)(\omega)\geq \frac{1}{2}$ for all $i\in[m]$ and $k\geq 0$. 
Therefore, by Corollary~\ref{cor:pathproperty}, the individual chains are 
ergodic almost surely. Hence, starting from any $x(0)\in\Rm$, we have 
$x(N(\omega))=\Phi(N(\omega),0)x(0)$
From time $N(\omega)$, the $m_r$ coordinates of $x(k)$ 
that belong to $S_r$ evolve by the chain $\{W^{(r)}(k)\}$. Since each 
chain $\{W^{(r)}(k)\}$ is ergodic, it follows that 
$\lim_{k\rightarrow\infty}(x_i(k)-x_j(k))=0$ for $i,j\in S_r$ and $r\in[\tau]$.
Note that although the time $N(\omega)$ is a
random (stopping) time, the sets $S_r$ are deterministic. 
\end{proof}

 Theorem~\ref{thm:gossip} states that 
any two agents in the same connected component of $\Ginf$ will 
consent for any initial point $x(0)\in\Rm$. 
In the upcoming sections, we develop tools to provide results 
similar to Theorem~\ref{thm:gossip} but for a larger class of random models.
In the development, we use some refinements of the ergodicity notion, 
as discussed in the following section.

\subsection{Mutually Weakly Ergodic and Mutually Ergodic Indices}\label{sec:me}
Departing from the ergodicity and moving toward the 
ergodic groups of a model, we make use of some refinements of
the ergodicity concept, as given in the following definition.

\begin{definition}\label{def:differentergodic}
Let $\{A(k)\}$ be a deterministic chain, and let $\{x(k)\}$
in~\eqref{eqn:dynsys} be driven by $\{A(k)\}$. We say that:
\begin{enumerate}[(a)]
\item  
Two indices $i,j\in[m]$ are \textit{mutually weakly ergodic indices} for
the chain if 
$\lim_{k\rightarrow\infty}(x_i(k)-x_j(k))=0$ for any initial time $t_0\geq 0$ 
and any initial point $x(t_0)\in\Rm$. We write $i\lra_A j$ when 
$i$ and $j$ are mutually weakly ergodic indices for the chain $\{A(k)\}$.
\item 
The index $i\in[m]$ is an \textit{ergodic index} for the chain 
if $\lim_{k\rightarrow\infty}x_i(k)$ exists for any starting time $t_0\geq 0$ 
and any initial point $x(t_0)\in\Rm$. 
The chain $\{A(k)\}$ is \textit{stable} when each $i\in [m]$ is 
an ergodic index. 
\item 
Two indices $i,j\in[m]$ are \textit{mutually ergodic indices} if $i$ and $j$ 
are ergodic indices and $i\lra_A j$ for any initial time $t_0\geq 0$ and 
any initial point $x(t_0)\in\Rm$. We write $i\lrA_A j$ when 
$i$ and $j$ are mutually ergodic indices for the chain $\{A(k)\}$.
\end{enumerate}
\end{definition}

The relation $\lra_A$ is an 
equivalence relation on $[m]$ and we can consider its equivalence classes.
We refer to the equivalence classes of this relation as 
\textit{ergodicity classes} and to the resulting partitioning of $[m]$ 
as the \textit{ergodicity pattern} of the model. 

Definition~\ref{def:differentergodic} extends naturally to a random model.
Specifically, if any of the properties in Definition~\ref{def:differentergodic}
holds almost surely for a random model $\W$, we say that 
the model $\W$ has the corresponding property.
In the further development, when unambiguous, 
we will omit the explicit dependency of the relation $\lra$ 
and $\lrA$ on the underlying chain. 

For a random model $\W$, the set of realizations for which 
$i\lra j$ (or $i\lrA j$) is a measurable set; hence, an event. 
When the model $\W$ is independent, these events are tail events 
and, therefore, each of these events happens with either probability zero or
one. Hence, for an independent random model $\W$, we write $i\lra j$ 
when $\prob{i\lra j}=1$ and $i\not\lra j$ when $\prob{i\lra j}=0$.
Analogously, we define  $i\lrA j$ and $i\not\lrA j$.
Thus, the ergodicity pattern of any independent random model is well-defined. 

In the light of Definition~\ref{def:differentergodic} and the above discussion,
by recalling the definition of the ergodicity
(Definition~\ref{def:ergodicity}), we see that a chain is ergodic 
if and only if its ergodicity class is a singleton or, equivalently,
its ergodicity pattern is $\{[m]\}$. 
Furthermore, we can interpret Theorem~\ref{thm:gossip} as follows:
for the gossip algorithm of Eq.~\eqref{eqn:extendedgossip}, we have
$i\lrA_W j$ if $i$ and $j$ belong to the same connected component of 
the infinite flow graph $\Ginf$. 
We prove in Section~\ref{sec:infiniteflow} that this result 
holds for any random model satisfying the conditions of the infinite flow 
theorem (Theorem~\ref{thm:infflowthm}). We actually show a stronger result
stating that $i\lrA_W j$ if and only if $i$ and $j$ 
belong to the same connected component of the infinite flow graph $\Ginf$.
We also prove that $i\lrA_W j$ if and only if $i\lrA_{\bar W} j$,
where $i\lrA_{\bar W} j$ is the relation for the expected chain
$\{\EXP{W(k)}\}$. To show these results, we use  
the concept of $\ell_1$-approximations and 
their properties (developed in Section~\ref{sec:approximation}),
and the concept of $\M$-class of random chains 
with their basic properties (established in Section~\ref{sec:classM}). 

\section{Approximation of chains}\label{sec:approximation}
Here, we consider an approximation of chains
that preserves ergodicity classes for the indices of the chains.
This approximation plays a key role in our study of non-ergodic chains
in Section~\ref{sec:infiniteflow}. 
In what follows, we say that two chains $\{W(k)\}$ and $\{U(k)\}$ 
have the {\it same ergodicity classes} if there 
exists a bijection $\theta:[m]\rightarrow[m]$ between the indices of 
$\{W(k)\}$ and $\{U(k)\}$ such that: 
\begin{itemize}
\item[(a)] $i\lra_W j$ if and only if $\theta(i)\lra_U \theta(j)$, and
\item[(b)]
$i$ is an ergodic index for $\W$ if and only if $\theta(i)$ is an ergodic 
index for $\{U(k)\}$.
\end{itemize} 
When  $\theta$ is a  bijection, then the indices of one of the chains
can be permuted according to the bijection $\theta$, so that 
the bijection $\theta$ can always be taken as identity.
We assume that this is the case for the rest of the paper.

We want to determine a perturbation of a chain 
that does not affect the ergodicity classes of the chain. 
It turns out that a perturbation that is small in $\ell_1$-norm has such 
a property. To formally set up the framework for this development,
we next introduce the concept of $\ell_1$-approximation.  
\begin{definition}\label{def:l1aprox}
A deterministic chain $\{B(k)\}$ is an $\ell_1$-approximation of a chain 
$\{A(k)\}$ if 
$\sum_{k=0}^{\infty}|A_{ij}(k)-B_{ij}(k)|<\infty$ for all $i,j\in [m]$. 
\end{definition}

As an example of such chains,
consider two models $\{A(k)\}$ and $\{B(k)\}$ that differ only 
at finitely many instances, i.e., there exists some time $t\ge0$
such that $W(k)=U(k)$ for all $k\geq t$. 
Since every entry in each of the matrices $A(k)$ and 
$B(k)$ is in the interval $[0,1]$, it follows that 
$\sum_{k=0}^{\infty}|A_{ij}(k)-B_{ij}(k)|\leq t$. Hence, the two models are 
$\ell_1$-approximation of each other. We will use such an approximation for 
random models (in the proof of Theorem~\ref{thrm:extendedinfflow}), 
so we need to extend this notion to random models.

Definition~\ref{def:l1aprox} extends to random chains by requiring that
$\ell_1$-approximation is almost sure. Specifically, a random chain $\{U(k)\}$ 
is an $\ell_1$-approximation of a random chain $\{W(k)\}$ if 
$\sum_{k=0}^{\infty}|W_{ij}(k)-U_{ij}(k)|<\infty$ almost surely 
for all $i,j\in [m]$.

We have some remarks for Definition~\ref{def:l1aprox}. First, 
we note that $\ell_1$-approximation is an equivalence relation for 
deterministic chains, since the set of all absolutely summable sequences in 
$\R$ is a vector space over $\R$. This is also true
for independent random chains $\{W(k)\}$ and 
$\{U(k)\}$ that are adapted to the same sigma-field. In this case,
we have $\sum_{k=0}^{\infty}|W_{ij}(k)-U_{ij}(k)|<\infty$ for all $i,j\in[m]$ 
with either probability zero or one, due to Kolmogorov's 0-1 law 
(\cite{Durrett}, page~61). Thus, $\{W(k)\}$ and $\{U(k)\}$ are 
$\ell_1$-approximations of each other with either probability zero or one.
Second, we note that there are alternative formulations of 
$\ell_1$-approximation. Since the matrices have a finite dimension, if 
$\sum_{k=0}^{\infty}|A_{ij}(k)-B_{ij}(k)|<\infty$ for all $i,j\in[m]$, 
then $\sum_{k=0}^{\infty}\|A(k)-B(k)\|_p<\infty$ for any $p\geq 1$.
Thus, an equivalent definition of $\ell_1$-approximation is obtained 
by requiring that 
$\sum_{k=0}^{\infty}\|A(k)-B(k)\|_p<\infty$ for some $p\geq 1$. 

To illustrate how we can construct an $\ell_1$-approximation,
we consider the time-varying gossip model of Section~\ref{sec:formulation}. 
Let $\W$ be the chain of the model as given in Eq.~\eqref{eqn:extendedgossip},
and let $\Ginf$ be its infinite flow graph. 
Assume that the connected components of $\Ginf$ are $S_1,\ldots,S_{\tau}$. 
Now, define the \textit{approximate gossip model} $\{U(k)\}$ as follows:
\begin{align}
U(k)=\left\{
\begin{array}{ll}
W(k)&\mbox{if link $\{i,j\}$ is activated at time $k$ with 
$i,j\in S_r$ for some $r$},\\
I&\mbox{otherwise}.
\end{array}\right.
\end{align}
Basically, in the approximate gossip model, we cut the links between 
the agents that belong to different connected components of $\Ginf$. 
In this case, by the definition of the infinite flow graph
we have $\sum_{k=0}^{\infty}|W_{ij}(k)-U_{ij}(k)|<\infty$ almost surely 
for all $i$ and $j$ that do not belong to the same connected component
of $\Ginf$. In this way, we have an approximate dynamic consisting of $\tau$ 
decoupled dynamics (one per connected component of $\Ginf$). 
At the same time, the original and the approximate dynamics have the same
ergodicity classes. This will follow from the forthcoming 
Lemma~\ref{lemma:approximation}, which shows the result
for a more general model. 

Now, we present Lemma~\ref{lemma:approximation} which establishes
the main result of this section. The lemma states that 
if two chains are $\ell_1$-approximations of each other, 
then their ergodicity classes are identical. 

\begin{lemma}\label{lemma:approximation}
(\textit{Approximation Lemma})
Let a deterministic chain $\{B(k)\}$ be an $\ell_1$-approximation of 
a deterministic chain $\{A(k)\}$. Then, the models have the same ergodicity
classes.
\end{lemma}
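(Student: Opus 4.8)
The plan is to work with forward transition matrices and reduce everything to a single dominated-convergence-style estimate. For $\{A(k)\}$ set $\Phi_A(k,t_0)=A(k-1)\cdots A(t_0)$ with $\Phi_A(t_0,t_0)=I$, so that $x(k)=\Phi_A(k,t_0)x(t_0)$, and define $\Phi_B$ analogously; all these products are row-stochastic with entries in $[0,1]$. Taking $x(t_0)=e_\ell$, I would first record the reformulations: $i\lra_A j$ holds if and only if $\Phi_A(k,t_0)_{i\ell}-\Phi_A(k,t_0)_{j\ell}\to0$ as $k\to\infty$ for every $t_0\ge0$ and every $\ell\in[m]$, and $i$ is an ergodic index for $\{A(k)\}$ if and only if $\lim_{k\to\infty}\Phi_A(k,t_0)_{i\ell}$ exists for every such $t_0,\ell$; the same statements hold verbatim for $B$. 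Since $\ell_1$-approximation is symmetric, it suffices to prove that each $A$-property implies the corresponding $B$-property, and I write $E(s)=B(s)-A(s)$, so that $\sum_{s\ge0}\sum_{p,q}|E_{pq}(s)|<\infty$ by hypothesis.

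The backbone is the telescoping (variation-of-constants) identity
\[
\Phi_B(k,t_0)=\Phi_A(k,t_0)+\sum_{s=t_0}^{k-1}\Phi_A(k,s+1)\,E(s)\,\Phi_B(s,t_0),
\]
obtained by summing $f(s+1)-f(s)$ for $f(s)=\Phi_A(k,s)\Phi_B(s,t_0)$ and using $\Phi_B(s+1,t_0)=B(s)\Phi_B(s,t_0)$ together with $\Phi_A(k,s)=\Phi_A(k,s+1)A(s)$. For the mutual-weak-ergodicity claim I left-multiply by $(e_i-e_j)^T$: the first term $(e_i-e_j)^T\Phi_A(k,t_0)\to0$ by the assumption $i\lra_A j$. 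Writing $g_s(k)$ for the row vector $(e_i-e_j)^T\Phi_A(k,s+1)$, the key observation is that, because $i\lra_A j$ is assumed for \emph{every} initial time, for each fixed $s$ we have $g_s(k)\to0$ entrywise as $k\to\infty$, while $\|g_s(k)\|_\infty\le1$ for all $k$ and $s$.

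The main step, and the only real obstacle, is then a split-sum argument. The crude bound $\|\Phi_B(k,t_0)-\Phi_A(k,t_0)\|\le\sum_{s\ge t_0}\sum_{p,q}|E_{pq}(s)|$ is merely \emph{bounded}, not vanishing, so one cannot simply argue that the two dynamics stay close; the convergence to zero must come from balancing a finite head against an $\ell_1$-small tail. Given $\epsilon>0$, choose $T$ with $\sum_{s\ge T}\sum_{p,q}|E_{pq}(s)|<\epsilon$, and split $\sum_{s=t_0}^{k-1}$ into a head $s<T$ and a tail $s\ge T$. Using $|g_s(k)_p|\le1$ and $|\Phi_B(s,t_0)_{q\ell}|\le1$, the tail is bounded entrywise by $\sum_{s\ge T}\sum_{p,q}|E_{pq}(s)|<\epsilon$ uniformly in $k$; the head is a \emph{fixed finite} sum of terms $g_s(k)\,E(s)\,\Phi_B(s,t_0)$ in which $E(s)\Phi_B(s,t_0)$ is independent of $k$ and $g_s(k)\to0$, so the head tends to $0$ as $k\to\infty$. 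Hence $\limsup_k|(e_i-e_j)^T\Phi_B(k,t_0)|\le\epsilon$ entrywise for every $\epsilon$, giving $(e_i-e_j)^T\Phi_B(k,t_0)\to0$, i.e. $i\lra_B j$.

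For the ergodic-index claim I would run the identical split with $e_i^T$ in place of $(e_i-e_j)^T$. Setting $h_s(k)=e_i^T\Phi_A(k,s+1)$, the assumption that $i$ is ergodic for $\{A(k)\}$ at every initial time yields a limit $h_s(\infty)$ for each fixed $s$. I then check that $\sum_{s=t_0}^{\infty}h_s(\infty)\,E(s)\,\Phi_B(s,t_0)$ converges absolutely, since its $s$-term is bounded by $\sum_{p,q}|E_{pq}(s)|$, and the same head/tail split — comparing $h_s(k)$ with $h_s(\infty)$ on the finite head and bounding both tails by $\sum_{s\ge T}\sum_{p,q}|E_{pq}(s)|$ — shows that $e_i^T\Phi_B(k,t_0)$ converges. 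Thus $i$ is an ergodic index for $\{B(k)\}$. Interchanging the roles of $A$ and $B$ gives the reverse implications, so the two models have the same ergodicity classes.
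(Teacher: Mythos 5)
Your proof is correct, but it takes a genuinely different route from the paper's. The paper argues at the level of trajectories: given $\epsilon$, it picks $N_\epsilon$ so that the tail $\sum_{k\ge N_\epsilon}\|A(k)-B(k)\|_\infty\le\epsilon$, launches an auxiliary $B$-trajectory $z$ from $z(N_\epsilon)=x(N_\epsilon)$, and proves by induction that $\|x(k)-z(k)\|_\infty\le\sum_{t=N_\epsilon}^{k-1}\|A(t)-B(t)\|_\infty\le\epsilon$ for all $k\ge N_\epsilon$; the limit property of the approximating chain is then invoked once, on this restarted trajectory, and $\epsilon\to0$ finishes. You instead work with the transition matrices and the exact variation-of-constants identity $\Phi_B(k,t_0)=\Phi_A(k,t_0)+\sum_{s=t_0}^{k-1}\Phi_A(k,s+1)E(s)\Phi_B(s,t_0)$, and you obtain the limit by a head/tail split: the tail is uniformly small by the $\ell_1$ hypothesis, while the head (finitely many terms) vanishes because the assumed property of $\{A(k)\}$ holds at the initial times $s+1$. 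Both proofs hinge on the same two ingredients --- summability of the tail and, crucially, the fact that ergodicity notions are quantified over \emph{all} initial times (the paper uses this to restart $B$ at $N_\epsilon$ from an arbitrary state; you use it to kill each head term) --- but the architectures differ: coupling plus induction versus an exact perturbation identity plus a dominated-convergence-style split. Your version buys a uniform treatment of both claims (mutual weak ergodicity and ergodic indices come from the same identity with $(e_i-e_j)^T$ or $e_i^T$ on the left) and even an explicit series representation of the limit for ergodic indices, namely $\lim_k e_i^T\Phi_A(k,t_0)+\sum_{s=t_0}^{\infty}h_s(\infty)E(s)\Phi_B(s,t_0)$, which quantifies how the limits of the two chains can differ; the paper's version is shorter, avoids the transition-matrix formalism, and its trajectory-closeness estimate \eqref{eqn:l1difference} is the form that is reused elsewhere in the paper. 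One small point of bookkeeping in your write-up: the direction you prove ($A$-property implies $B$-property) is the reverse of the paper's, but as you and the paper both note, symmetry of $\ell_1$-approximation makes either direction sufficient.
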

\begin{proof}
Suppose that $i\lra_B j$. Let $t_0=0$ and let $x(0)\in[0,1]^m$. 
Also, let $\{x(k)\}$ be the dynamics as defined in 
Eq.~\eqref{eqn:dynsys} by matrices $\{A(k)\}$. For any $k\ge 0$, we have 
\[x(k+1)=A(k)x(k)=(A(k)-B(k))x(k)+B(k)x(k).\]
Since $|x_i(k)|\leq 1$ for any $k\ge0$ and any $i\in[m]$, it follows that
for all $k\ge0$,
\begin{align}\label{eqn:l1approx}
\|x(k+1)-B(k)x(k)\|_{\infty}\leq \|A(k)-B(k)\|_{\infty}.
\end{align}

We want to show that $i\lra_A j$, or equivalently that
$\lim_{k\to\infty}(x_i(k)-x_j(k))=0$. To do so, 
we let $\epsilon>0$ be arbitrary but fixed. 
Since $\{B(k)\}$ is an $\ell_1$-approximation of $\{A(k)\}$, 
there exists time $N_{\epsilon}\geq 0$ such that 
$\sum_{k=N_{\epsilon}}^{\infty}\|A(k)-B(k)\|_{\infty}\le\epsilon$. 
Let $\{z(k)\}_{k\geq N_{\epsilon}}$ be the dynamics given by 
Eq.~\eqref{eqn:dynsys}, which is driven by $\{B(k)\}$ 
and started at time $N_\epsilon$ with the initial 
vector $z(N_\epsilon)=x(N_\epsilon)$. 
We next show that 
\begin{align}\label{eqn:sum}
\|x(k+1)-z(k+1)\|_{\infty}
\leq \sum_{t=N_{\epsilon}}^{k}\|A(t)-B(t)\|_{\infty}
\qquad \hbox{for all $k\ge N_\epsilon$}.\end{align}
We use the induction on $k$, so we consider $k=N_{\epsilon}$. Then, 
by Eq.~\eqref{eqn:l1approx}, we have 
$\|x(N_{\epsilon}+1)-B(N_{\epsilon})x(N_{\epsilon})\|_{\infty}
\leq \|A(N_\epsilon)-B(N_\epsilon)\|_{\infty}$. Since 
$z(N_{\epsilon})=x(N_{\epsilon})$,  it follows that
$\|x(N_{\epsilon}+1)-z(N_{\epsilon}+1)\|_{\infty}
\leq \|A(N_\epsilon)-B(N_\epsilon)\|_{\infty}$. 
We now assume that 
$\|x(k)-z(k)\|_{\infty}\leq 
\sum_{t=N_{\epsilon}}^{k-1}\|A(t)-B(t)\|_{\infty}$ for some $k>N_\epsilon$. 
Using Eq.~(\ref{eqn:l1approx}) and the triangle inequality, we have
\begin{align}\nonumber
\|x(k+1)-z(k+1)\|_{\infty}&=\|A(k)x(k)-B(k)z(k)\|_{\infty}\cr
&=\|(A(k)-B(k))x(k)+B(k)(x(k)-z(k))\|_{\infty}\cr 
&\leq \|(A(k)-B(k))\|_{\infty}\|x(k)\|_{\infty}
+\|B(k)\|_{\infty}\|(x(k)-z(k))\|_{\infty}.  \end{align}
By the induction hypothesis and relation $\|B(k)\|_{\infty}=1$,
which holds since $B(k)$ is a stochastic matrix, it follows that
$\|x(k+1)-z(k+1)\|_{\infty}
\leq\sum_{t=N_{\epsilon}}^{k}\|A(t)-B(t)\|_{\infty}$,
thus showing relation~\eqref{eqn:sum}.

Recalling that the time $N_{\epsilon}\geq 0$ is such that 
$\sum_{k=N_{\epsilon}}^{\infty}\|A(k)-B(k)\|_{\infty}\le\epsilon$
and using relation~\eqref{eqn:sum}, we obtain
for all $k\ge N_\e$, 
\begin{align}\label{eqn:l1difference}
\|x(k+1)-z(k+1)\|_{\infty}
\leq\sum_{t=N_{\epsilon}}^{k}\|A(t)-B(t)\|_{\infty}
\leq\sum_{t=N_{\epsilon}}^{\infty}\|A(t)-B(t)\|_{\infty}\leq \epsilon.
\end{align}
Therefore, $|x_i(k)-z_i(k)|\leq \epsilon$ and 
$|z_j(k)-x_j(k)|\leq \epsilon$ for any  $k\ge N_\e$,
and by the triangle inequality we have 
$|(x_i(k)-x_j(k))+(z_i(k)-z_j(k))|\leq 2\epsilon$ for any 
$k\geq N_{\epsilon}$. Since $i\lra_B j$, it follows that 
$\lim_{k\rightarrow\infty}(z_i(k)-z_j(k))=0$ and 
$\limsup_{k\rightarrow\infty}|x_i(k)-x_j(k)|\leq 2\epsilon.$
The preceding relation holds for any $\e>0$, 
implying that $\lim_{k\rightarrow\infty}(x_i(k)-x_j(k))=0.$
Furthermore, the same analysis would go through when $t_0$ is arbitrary
and the initial point $x(0)\in\re^m$ is arbitrary with 
$\|x(0)\|_\infty\ne1.$ Thus, we have $i\lra_A j$. 

Using the same argument and inequality~\eqref{eqn:l1difference}, 
one can deduce that if $i$ is ergodic index for $\{B(k)\}$, then it is also
an ergodic index for $\{A(k)\}$. Since $\ell_1$-approximation is symmetric
with respect to the chains, the result follows. 
\end{proof}

Approximation lemma states that the ergodicity classes coincide
for two chains that are $\ell_1$-approximations of each other. 
However, the lemma does not say that the limiting values of the chains have 
to be the same for any of the ergodic classes. Within an ergodic class, 
the limiting values may differ. In particular, consider an ergodic chain
$\{A(k)\}$ and its $\ell_1$-approximation chain $\{B(k)\}$. By the 
the approximation lemma it follows that $B\{(k)\}$ is also ergodic.
Thus, we conclude that the class of ergodic deterministic chains is closed 
under $\ell_1$-approximations. 
The same is true for the class of ergodic independent random models. 

Approximation lemma is a tight result with respect to the choice of 
$\ell_1$-norm. In other words, the lemma need not hold if 
we consider $\ell_p$-approximation with $p>1$.
To see this, consider the following $2\times 2$ chain:
\begin{align}\nonumber
A(k)=\left[\begin{array}{cc}
1-\frac{1}{k+2}&\frac{1}{k+2}\\
\frac{1}{k+2}&1-\frac{1}{k+2}
\end{array}
\right]\qquad \mbox{for all $k\geq 0$}.
\end{align}
The chain is doubly stochastic and $A_{ii}(k)\geq \frac{1}{2}$ for $i=1,2$ 
and all $k\geq 0$. Thus, it has weak feedback property with 
a feedback coefficient $\gamma=\frac{1}{2}$. Also, since 
$\sum_{k=0}^{\infty}\frac{1}{k+2}=\infty$, by the infinite flow theorem 
(Theorem~\ref{thm:infflowthm}), the chain is ergodic. 
For any $p>1$, the identity chain $\{I\}$ is an $\ell_p$-approximation 
of $\{A(k)\}$, i.e., 
$\sum_{k=0}^\infty|A_{ij}(k)-I_{ij}|^p
=\sum_{k=0}^\infty \frac{1}{(k+2)^p}<\infty$ for $i,j=1,2$.
However, the chain $\{I\}$ is not ergodic and, therefore, 
$\ell_1$-norm in the approximation lemma cannot be replaced by any 
$\ell_p$-norm for $p>1$.

We now present an important result which is a more involving consequence
of Lemma~\ref{lemma:approximation}. This result relates mutual weak ergodicity 
of a model to the connected components in the infinite flow graph of the model
(see Definition~\ref{def:infiniteflowgraph}).
The result plays a crucial role in the characterization of the ergodicity
of some chains (in forthcoming Theorem~\ref{thm:sufficient}). 
\begin{lemma}\label{lemma:extendedinfinite}
Let $\{A(k)\}$ be a deterministic chain and let $\Ginf$ be its
infinite flow graph. Then, $i\lra_A j$ implies that 
$i$ and $j$ belong to the same connected component of $\Ginf$.
\end{lemma}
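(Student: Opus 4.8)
The plan is to prove the contrapositive: assuming $i$ and $j$ lie in \emph{distinct} connected components of $\Ginf$, I would show that $i\not\lra_A j$. The engine of the argument is the Approximation Lemma (Lemma~\ref{lemma:approximation}): I will replace $\{A(k)\}$ by a block-diagonal chain $\{B(k)\}$ that is an $\ell_1$-approximation of $\{A(k)\}$ and that provably fails to make $i$ and $j$ mutually weakly ergodic.

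First I would let $S$ be the connected component of $\Ginf$ containing $i$, so that $j\in\bar S$. Since a connected component is maximal with respect to inclusion, no edge of $\Einf$ joins a vertex of $S$ to a vertex of $\bar S$; by Definition~\ref{def:infiniteflowgraph} this means $\sum_{k=0}^{\infty}(A_{pq}(k)+A_{qp}(k))<\infty$ for every $p\in S$ and $q\in\bar S$. Summing over the finitely many such pairs gives $\sum_{k=0}^{\infty}A_S(k)<\infty$, i.e., the total flow across the cut $(S,\bar S)$ is finite. This finiteness is exactly what will let me cut the cross-links at an $\ell_1$-cost.

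Next I would construct $\{B(k)\}$ by keeping each diagonal block of $A(k)$ intact and zeroing out the inter-block entries, restoring stochasticity on the diagonal. Concretely, set $B_{pq}(k)=A_{pq}(k)$ when $p\neq q$ lie in the same block, $B_{pq}(k)=0$ when $p,q$ lie in different blocks, and $B_{pp}(k)=A_{pp}(k)+\sum_{q}A_{pq}(k)$, where the sum runs over the indices $q$ in the block \emph{not} containing $p$. Then each $B(k)$ is stochastic, $B(k)=\diag(B_{SS}(k),B_{\bar S\bar S}(k))$ is block diagonal, and a short computation shows $\sum_{p,q}|A_{pq}(k)-B_{pq}(k)|=2A_S(k)$. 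Hence $\sum_{k=0}^{\infty}|A_{pq}(k)-B_{pq}(k)|<\infty$ for all $p,q$, so $\{B(k)\}$ is an $\ell_1$-approximation of $\{A(k)\}$.

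Finally, I would drive $\{B(k)\}$ from the initial vector $x(0)$ with $x_p(0)=1$ for $p\in S$ and $x_p(0)=0$ otherwise. Because the $S$-block of every $B(k)$ is itself stochastic, the all-ones vector on $S$ is a fixed point of the $S$-dynamics, while block-diagonality pins the $\bar S$-coordinates at $0$ for all time; thus $x_i(k)-x_j(k)=1$ for every $k$, so $i\not\lra_B j$. Lemma~\ref{lemma:approximation} then forces $\{A(k)\}$ and $\{B(k)\}$ to have the same ergodicity classes (with the identifying bijection taken as the identity), whence $i\not\lra_A j$, completing the contrapositive. I expect the only delicate point to be the bookkeeping that simultaneously confirms the stochasticity of each $B(k)$ and pins down the discrepancy bound $\sum_{k}\sum_{p,q}|A_{pq}(k)-B_{pq}(k)|=2\sum_{k}A_S(k)<\infty$; once that is in place, the conclusion is an immediate application of the Approximation Lemma.
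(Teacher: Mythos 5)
Your proposal is correct and follows essentially the same route as the paper's own proof: cut the cross-links between the component $S$ containing $i$ and its complement (which costs only finitely much in $\ell_1$ since $\sum_k A_S(k)<\infty$), restore stochasticity on the diagonal, observe that the indicator vector of $S$ is a fixed point of the resulting block-diagonal chain so $i\not\lra_B j$, and conclude via the Approximation Lemma. The only cosmetic difference is that you phrase it as a contrapositive while the paper phrases it as a proof by contradiction.
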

\begin{proof}
To arrive at a contradiction, suppose that $i$ and $j$ belong to 
two different connected components $S,T\subset [m]$ of $\Ginf$. 
Therefore, $T\subset \bar{S}$ implying that $\bar{S}$ is not empty. 
Also, since $S$ is a connected component of $\Ginf$, it follows that 
$\sum_{k=0}^{\infty}A_S(k)<\infty$. Without loss of generality, we assume 
that $S=\{1,\ldots,i^*\}$ for some $i^*<m$, and consider the chain $\{B(k)\}$
defined by
\begin{align}
B_{ij}(k)=\left\{
\begin{array}{ll}
A_{ij}(k)&\mbox{if $i\not=j$ and $i,j\in S$ or $i,j\in \bar{S}$},\\
0&\mbox{if $i\not=j$ and $i\in S,j\in\bar{S}$ or $i\in\bar{S},j\in{S}$},\\
A_{ii}(k)+\sum_{\ell\in\bar{S}}A_{i\ell}(k)&\mbox{if $i=j\in S$},\\
A_{ii}(k)+\sum_{\ell\in{S}}A_{i\ell}(k)&\mbox{if $i=j\in \bar{S}$}.
\end{array}\right.
\end{align}
The above approximation simply sets the cross terms between 
$S$ and $\bar{S}$ to zero, and adds the corresponding values to the diagonal 
entries to maintain the stochasticity of the matrix $B(k)$. 
Therefore, for the stochastic chain $\{B(k)\}$ we have
\[B(k)=\left[\begin{array}{cc}
B_1(k)&0\\0&B_2(k)
\end{array}\right],\]
where $B_1(k)$ and $B_2(k)$ are respectively $i^*\times i^*$ and 
$(m-i^*)\times(m-i^*)$ matrices for all $k\geq 0$. By the assumption 
$\sum_{k=0}^{\infty}A_S(k)<\infty$, the chain $\{B(k)\}$ is an 
$\ell_1$-approximation of $\{A(k)\}$. 
Now, let $u_{i^*}$ be the vector which has the first $i^*$ coordinates equal to
one and the rest equal to zero, i.e.,  $u_{i^*}=\sum_{\ell=1}^{i^*}e_{\ell}$. 
Then, $B(k)u_{i^*}=u_{i^*}$ for any $k\geq 0$ implying that $i\not\lra_B j$. 
By approximation lemma (Lemma~\ref{lemma:approximation}) it follows
$i\not\lra_A j$, which is a contradiction. 
\end{proof}

Lemma~\ref{lemma:extendedinfinite} applies to independent random models
as well. More specifically, for an independent random model 
$\{W(k)\}$ and its infinite flow graph $\Ginf$, 
 Lemma~\ref{lemma:extendedinfinite} states: 
if indices $i$ and $j$ are mutually weakly ergodic,
then $i$ and $j$ belong to the same connected component of $\Ginf$. 

As a special consequence of Lemma~\ref{lemma:extendedinfinite},
we obtain that the infinite flow property is necessary for 
the ergodicity (Theorem~1 in~\cite{ErgodicityPaper}).
To see this, we note that by Lemma~\ref{lemma:extendedinfinite},
the ergodic classes of an independent random model $\{W(k)\}$ 
are subsets of the connected components of its infinite flow graph. 
When the model is ergodic,  its infinite flow graph is connected, 
which implies that the model has infinite flow property. 

The converse result of Lemma~\ref{lemma:extendedinfinite} is not true 
in general. For example, let 
$A(k)=\left[\begin{array}{cc}0&1\\1&0\end{array}\right]$ for all $k\geq 0$.
In this case, the infinite flow graph is connected while the model is not 
ergodic. In the resulting dynamics, agents 1 and 2 keep swapping their
initial values $x_1(0)$ and $x_2(0)$.

\section{Ergodicity in class $\M$}\label{sec:classM}
Recall that by Lemma~\ref{lemma:extendedinfinite} the
infinite flow property is necessary for ergodicity of independent 
random models. Also, recall Theorem~\ref{thm:infflowthm} which 
states that the infinite flow property is {\it also 
sufficient} for ergodicity  for independent random models characterized 
by a common steady state $\pi>0$ in expectation and weak feedback property. 

In this section, we introduce a larger class of random models for which 
the infinite flow property is also sufficient for ergodicity. 
This larger class, termed $\M$, encompasses any independent random model 
that can be obtained as an $\ell_1$-approximation of a model with 
a common steady state $\pi>0$ in expectation and weak feedback property.
As a result, the new class includes models that do not necessarily have 
a common steady state $\pi>0$ in expectation. This class and its
properties, which may be of interest in their own right, 
provide important pieces for the development of the main result in 
Section~\ref{sec:infiniteflow}.

Next, we formally introduce the class $\M$ of random models.
\begin{definition}
An independent random model $\{W(k)\}$ belongs to 
the class $\M$ if the dynamic system~\eqref{eqn:dynsys} is such that
for any $t_0\geq 0$ and any $x(t_0)\in \Rm$,
\begin{align}\label{eqn:M2definition}
\sum_{k=t_0}^{\infty}\sum_{i<j}H_{ij}(k)\EXP{(x_i(k)-x_j(k))^2}<\infty,
\end{align}
where $H(k)=\EXP{W^T(k)W(k)}$.  
\end{definition}
The relation in Eq.~\eqref{eqn:M2definition} can be loosely interpreted as
a requirement that the ``total expected variation'' of the entries in $x(k)$
over time is finite, where at each time the variation 
of the entries in $x(k)$ is measured by the quantity
$\sum_{i<j}H_{ij}(k)\EXP{(x_i(k)-x_j(k))^2}$. 
However, relation~\eqref{eqn:M2definition} is more involving than this simple
interpretation since $\sum_{i<j}H_{ij}(k)\EXP{(x_i(k)-x_j(k))^2}$ need
not capture actual variations in $x(k)$, as some of the quantities
$H_{ij}(k)$ may be zero.

Note that, when a model has a common steady state $\pi>0$ in expectation, 
then, almost surely we have (\cite{ErgodicityPaper}, Theorem 5):
\[\sum_{k=0}^{\infty}\sum_{i<j}
\bar H_{ij}(k)(x_i(k)-x_j(k))^2<\infty,\]
where $\bar H(k)=\EXP{W^T(k)\diag(\pi)W(k)}$. Thus,
$\min_{i\in[m]}\pi_i\, \EXP{W^T(k)W(k)}\leq \bar H(k)$, implying that any 
independent random model with a common steady state $\pi>0$ belongs to 
the class $\M$. We observe that the class $\M$ includes chains
that do not necessarily have a common steady state $\pi>0$ in expectation.
To see this, consider the class of deterministic chains 
$\{A(k)\}$ that satisfy a bounded-connectivity condition and have a uniform
lower-bound on their positive entries, such as those discussed 
in~\cite{Tsitsiklis84,Jadbabaie03,Nedic_cdc07,Nedic09,delaypaper}. 
In these models, the sequence 
$d(x(k))=\max_{i\in[m]}x_i(k)-\min_{j\in[m]}x_j(k)$ is (sub)geometric  and, 
thus, it is absolutely summable. Furthermore, 
$H_{ij}(k)=[A^T(k)A(k)]_{ij}\leq m$, which together with relation
$|x_i(k)-x_j(k)|\leq d(x(k))$ for all $i,j\in[m]$, implies that
$\sum_{k=0}^{\infty}\sum_{i<j}[A^T(k)A(k)]_{ij}(x_i(k)-x_j(k))^2
<\infty$, i.e., the defining property of the $\M$-class in 
Eq.~\eqref{eqn:M2definition} holds.

What is interesting is that an $\ell_1$-approximation of a chain
with a common steady state \hbox{$\pi>0$} in expectation
also belongs to $\M$, as shown in the following lemma.

\begin{lemma}\label{lemma:M2approx} 
Let $\{W(k)\}$ be an independent random model with a common steady state 
\hbox{$\pi>0$} in expectation. Let an independent random model $\{U(k)\}$ be  
an $\ell_1$-approximation of $\{W(k)\}$. Then, $\{U(k)\}\in\M$. 
\end{lemma}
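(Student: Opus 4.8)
The plan is to show that $\{U(k)\}$ satisfies the defining inequality~\eqref{eqn:M2definition} of the class $\M$, i.e., that for any $t_0\ge 0$ and any $x(t_0)\in\Rm$,
\[
\sum_{k=t_0}^{\infty}\sum_{i<j}H^U_{ij}(k)\,\EXP{(y_i(k)-y_j(k))^2}<\infty,
\]
where $\{y(k)\}$ is the dynamics driven by $\{U(k)\}$ and $H^U(k)=\EXP{U^T(k)U(k)}$. The natural strategy is to exploit the fact that $\{W(k)\}\in\M$ (this is exactly the remark preceding the lemma: a model with a common steady state $\pi>0$ belongs to $\M$), and to transfer the summability from the $W$-dynamics to the $U$-dynamics using the $\ell_1$-closeness of the two chains. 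Since $H^W(k)=\EXP{W^T(k)W(k)}$ already gives a finite sum against the $W$-dynamics, I want to compare $H^U$ with $H^W$ and the $U$-trajectory with the $W$-trajectory.

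First I would establish that the $\ell_1$-approximation hypothesis on the matrices propagates to an $\ell_1$-type bound on the quadratic data. Concretely, writing $E(k)=U(k)-W(k)$, the hypothesis says $\sum_{k}|E_{ij}(k)|<\infty$ a.s.\ (equivalently $\sum_k\|E(k)\|_p<\infty$), and the finite-dimensionality remark in the paper lets me pass freely between entrywise and norm versions. From $U^T U = W^T W + W^T E + E^T W + E^T E$ and the uniform bound $\|W(k)\|,\|U(k)\|\le\sqrt m$ (stochastic matrices have bounded entries), I can bound $\|H^U(k)-H^W(k)\|$ entrywise by a constant times $\EXP{\|E(k)\|}$, which is summable. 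Thus $\sum_k\|H^U(k)-H^W(k)\|<\infty$, so $H^U$ and $H^W$ differ by an $\ell_1$ term.

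Next I would control the trajectories. By the Approximation Lemma machinery (the trajectory comparison in~\eqref{eqn:sum}–\eqref{eqn:l1difference}), the $U$-driven dynamics $\{y(k)\}$ stays uniformly close to the $W$-driven dynamics $\{x(k)\}$ started from the same point; more precisely $\sup_k\|y(k)-x(k)\|_\infty$ is bounded (indeed $\sum_k\|E(k)\|<\infty$ forces the accumulated deviation to stay finite), and both trajectories are uniformly bounded since the matrices are stochastic. This gives $\EXP{(y_i(k)-y_j(k))^2}$ bounded and comparable to $\EXP{(x_i(k)-x_j(k))^2}$ up to terms that are summable when weighted by $H^U$. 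Combining the two comparisons, I split
\[
H^U_{ij}(k)\EXP{(y_i(k)-y_j(k))^2}
= H^W_{ij}(k)\EXP{(x_i(k)-x_j(k))^2}+R_{ij}(k),
\]
where the remainder $R_{ij}(k)$ collects (i) the $\ell_1$ difference $H^U-H^W$ against a bounded trajectory term, and (ii) $H^U$ (bounded by $m$) against the difference of the squared gaps; both families sum to a finite value. Since $\{W(k)\}\in\M$ makes the main term summable, the whole expression is summable, giving $\{U(k)\}\in\M$.

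The main obstacle I anticipate is handling the interaction between expectations and the $\ell_1$-a.s.\ bounds carefully, since the $\ell_1$-approximation is an almost-sure statement while $\M$ is defined through expectations $H(k)=\EXP{W^T(k)W(k)}$ and $\EXP{(x_i(k)-x_j(k))^2}$. The trajectory-deviation bound $\|y(k)-x(k)\|_\infty\le\sum_t\|E(t)\|_\infty$ is pathwise but the summability constant $\sum_t\|E(t)\|_\infty$ is itself a random variable; to pass to expectations I will likely need a uniform (integrable) bound on this random constant, or I will argue the summability path-by-path and then invoke independence together with a Fubini/Tonelli interchange to move the expectation inside the sum over $k$. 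Getting this measure-theoretic bookkeeping right—ensuring the remainder terms are genuinely summable \emph{after} taking expectations—is the delicate step; the algebraic comparisons of $H^U$ to $H^W$ and of $y$ to $x$ are routine once that framework is in place.
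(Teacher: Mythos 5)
Your strategy---transferring the $\M$-property from $\{W(k)\}$ to $\{U(k)\}$ by comparing the two trajectories---breaks at exactly the step you label (ii). The trajectory comparison behind the Approximation Lemma (relations \eqref{eqn:sum}--\eqref{eqn:l1difference}) gives only $\|y(k)-x(k)\|_\infty\le\sum_{t<k}\|U(t)-W(t)\|_\infty$: this accumulated deviation is \emph{bounded} by the finite total $\ell_1$-discrepancy, but it does \emph{not} tend to zero (the two dynamics generically converge to different limits). Hence the only available bound on the difference of squared gaps is a constant, and your remainder is controlled only by
\[
C\sum_k H^U_{ij}(k),
\]
which diverges in essentially every case of interest: already for the static chain with all entries $1/m$ one has $H^U_{ij}(k)=1/m$ for all $k$, and for the gossip model $H^U_{ij}(k)=\tfrac{1}{2}P_{ij}(k)$; non-summability of the weights $H^U_{ij}(k)$ is precisely the infinite-flow situation the lemma is meant to cover. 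An $\epsilon$-restart version of the comparison (as in the proof of Lemma~\ref{lemma:approximation}) fails for the same reason, since $\epsilon\sum_{k\ge N_\epsilon}H^U_{ij}(k)=\infty$ for every fixed $\epsilon$. Note also that the black-box fact $\{W(k)\}\in\M$ only controls $H^W$-weighted gaps \emph{along $W$-trajectories}; once the trajectories separate, it says nothing about the sums you actually need, so the hole cannot be patched with the stated tools.

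The missing idea---and the paper's route---is to never compare two trajectories: run a per-step drift argument on the $U$-trajectory alone, measured by the Lyapunov function $V(x)=\sum_i\pi_i(x_i-\pi^Tx)^2$ built from the steady state of $\{W(k)\}$ (note $\{U(k)\}$ itself need not have one). Writing $x(k+1)=W(k)x(k)+y(k)$ with $y(k)=(U(k)-W(k))x(k)$, the one-step decrease estimate for $W(k)$ (Theorem~4 of~\cite{ErgodicityPaper}) applies because $x(k)$ is independent of $W(k)$, and the perturbation enters only as an additive error of order $m\,\EXP{\|W(k)-U(k)\|_{\infty}}$ per step, which is summable; the Robbins--Siegmund theorem then gives $\sum_{k}\sum_{i<j}L_{ij}(k)(x_i(k)-x_j(k))^2<\infty$ a.s.\ with $L(k)=\EXP{W^T(k)\diag(\pi)W(k)}$, and the entrywise comparison $\pi_{\min}H_{ij}(k)\le L_{ij}(k)+3\,\EXP{\|W(k)-U(k)\|_{\infty}}$ converts the weights to $H(k)=\EXP{U^T(k)U(k)}$. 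The structural point is that a \emph{summable additive} error inside a drift inequality is harmless no matter how large $\sum_k H_{ij}(k)$ is, whereas a \emph{bounded multiplicative} discrepancy against non-summable weights is fatal. Finally, on your measure-theoretic worry: Fubini/Tonelli cannot produce $\sum_k\EXP{\|W(k)-U(k)\|_\infty}<\infty$ from the almost-sure $\ell_1$-bound; the correct tool is Kolmogorov's three-series theorem (the summands are independent across $k$ and uniformly bounded), which is how the paper handles the analogous passage in Theorem~\ref{thrm:extendedinfflow}.
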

\begin{proof}
Define function $V(x)=\sum_{i=1}^{m}\pi_i(x_i-\pi^Tx)^2$.
Let $D=\diag(\pi)$, 
$H(k)=\EXP{U^T(k)U(k)}$ and $L(k)=\EXP{W^T(k)DW(k)}$ for $k\geq 0$. 
Also, let $x(0)\in [0,1]^m$ and $\{x(k)\}$ be the dynamics driven by the chain 
$\{U(k)\}$. Then, we have for any $k\ge0$,
\begin{align}\label{eqn:M2approxOriginal}
\EXP{V(x(k+1))|x(k)}&=\EXP{x^T(k+1)(D-\pi\pi^T)x(k+1)|x(k)}\cr 
&=\EXP{(W(k)x(k)+y(k))^T(D-\pi\pi^T)(W(k)x(k)+y(k))|x(k)}\\
&\le\EXP{V(W(k)x(k))|x(k)}+2\EXP{x(k+1)^T(D-\pi\pi^T)y(k)|x(k)},\nonumber
\end{align}
where $y(k)=(U(k)-W(k))x(k)$ and the last inequality follows by positive
semi-definiteness of the matrix $D-\pi\pi^T$. 
By Theorem~4 in~\cite{ErgodicityPaper}, we also have 
$\EXP{V(W(k)x(k))|x(k)}\leq V(x(k))-\sum_{i<j}L_{ij}(k)(x_i(k)-x_j(k))^2$. 
Therefore, it follows that for all $k\ge0$,
\begin{align}\label{eqn:M2approxSecond}
\EXP{V(x(k+1))|x(k)}&\leq 
V(x(k))-\sum_{i<j}L_{ij}(k)(x_i(k)-x_j(k))^2\cr 
&\qquad +2\EXP{x(k+1)^T(D-\pi\pi^T)y(k)|x(k)}.
\end{align}

Since each $U(k)$ is stochastic, we have $x(k)\in [0,1]^m$ implying that
$\|y(k)\|_{\infty}\leq \|W(k)-U(k)\|_{\infty}$. 
The summation of the absolute values of the entries in the 
$i$th row of $D-\pi\pi^T$ 
is $\pi_i(2-\pi_i)\leq 1$ for all $i$. Thus,
we have $\|(D-\pi\pi^T)y(k)\|_{\infty}\le \|y(k)\|_{\infty} 
\leq \|W(k)-U(k)\|_{\infty}$.
By $x(k)\in[0,1]^m$ for all $k$, we have $\|x(k)\|_1\le m$
impying that $x(k+1)^T(D-\pi\pi^T)y(k)\leq m\|W(k)-U(k)\|_{\infty}$. 
Since $U(k)$ and $W(k)$ are independent of $x(k)$, we obtain
\[\EXP{x(k+1)^T(D-\pi\pi^T)y(k)|x(k)}\le m\EXP{\|W(k)-U(k)\|_{\infty}},\]
which when combined with Eq.~(\ref{eqn:M2approxSecond}) yields
\begin{align*}
\EXP{V(x(k+1))|x(k)}\leq 
V(x(k))-\sum_{i<j}L_{ij}(k)(x_i(k)-x_j(k))^2+2m\EXP{\|W(k)-U(k)\|_{\infty}}.
\end{align*}
Note that  
$\sum_{k=0}^{\infty}\EXP{\|W(k)-U(k)\|_{\infty}}<\infty$ since
$\{U(k)\}$ is an $\ell_1$-approximation of $\{W(k)\}$. Thus, 
by the Robbins-Siegmund theorem (\cite{Poznyak}, page 164), 
$\sum_{k=0}^{\infty}\sum_{i<j}L_{ij}(k)(x_i(k)-x_j(k))^2{<\infty}$ 
almost surely.

The last step is to show that the difference between the two sums 
$\sum_{k=0}^{\infty}\sum_{i<j}L_{ij}(k)(x_i(k)-x_j(k))^2$ and 
$\sum_{k=0}^{\infty}\sum_{i<j}H_{ij}(k)(x_i(k)-x_j(k))^2$ is finite. 
Since $|W_{\ell i}(k)-U_{\ell i}(k)|\leq \|W(k)-U(k)\|_{\infty}$ for any 
$i,\ell\in[m]$, using the definitions of $H(k)$ and $L(k)$, we obtain
\begin{align}\label{eqn:HLcompare}
\pi_{\min}H_{ij}(k)
&\leq\sum_{\ell=1}^m\EXP{\pi_{\ell}U_{\ell i}(k)U_{\ell j}(k)}\cr 
&= \sum_{\ell=1}^m\EXP{\pi_{\ell}(W_{\ell i}(k)
+[W(k)-U(k)]_{\ell i})(W_{\ell j}(k)+[W(k)-U(k)]_{\ell j})}\cr 
&\leq L_{ij}(k)+\EXP{\|W(k)-U(k)\|_{\infty}
\sum_{\ell=1}^m\pi_{\ell}(W_{{\ell i}}(k)+W_{{\ell j}}(k)+1)},
\end{align}
where $\pi_{\min}=\min_{i\in[m]}\pi_i$. The last inequality is obtained using 
the triangle inequality and the following 
\[[W(k)-U(k)]_{\ell i}[W(k)-U(k)]_{\ell j}
\leq \|W(k)-U(k)\|^2_{\infty}\leq \|W(k)-U(k)\|_{\infty}.\]
Therefore, using relation \eqref{eqn:HLcompare},
$W_{\ell j}(k)\in[0,1]$, and the stochasticity of $\pi$, we have
\begin{align}\nonumber 
\pi_{\min}H_{ij}(k)\leq L_{ij}(k)+3\EXP{\|W(k)-U(k)\|_{\infty}}.
\end{align}
Therefore,
\begin{align}\nonumber
\pi_{\min}\sum_{k=0}^{\infty}\sum_{i<j}&H_{ij}(k)(x_i(k)-x_j(k))^2\cr 
&\leq \sum_{k=0}^{\infty}\sum_{i<j}L_{ij}(k)(x_i(k)-x_j(k))^2+
3\sum_{k=0}^{\infty}\sum_{i<j}\EXP{\|W(k)-U(k)\|_{\infty}}(x_i(k)-x_j(k))^2\cr 
&\leq \sum_{k=0}^{\infty}\sum_{i<j}L_{ij}(k)(x_i(k)-x_j(k))^2+3m^2
\sum_{k=0}^{\infty}\EXP{\|W(k)-U(k)\|_{\infty}},
\end{align}
where the last inequality holds since $(x_i(k)-x_j(k))^2\leq 1$.  
Since $\sum_{k=0}^{\infty}\EXP{\|W(k)-U(k)\|_{\infty}}<\infty$ and 
$\pi_{\min}>0$, and we have 
shown that $\sum_{k=0}^{\infty}\sum_{i<j}L_{ij}(k)(x_i(k)-x_j(k))^2<\infty$ 
almost surely, it follows that 
$\sum_{k=0}^{\infty}\sum_{i<j}H_{ij}(k)(x_i(k)-x_j(k))^2<\infty$ almost surely.
\end{proof}

We next show that, for the chains in class $\M$ that have 
weak feedback property, the infinite flow is also sufficient for ergodicity.
This result non-trivially extends the class of independent chains 
to which the infinite flow theorem applies. 

\begin{theorem}\label{thm:sufficient}
Let $\{W(k)\}$ be a class $\M$ model with weak feedback property. 
Then, the infinite flow property is both necessary and sufficient 
for the ergodicity of the model. 
\end{theorem}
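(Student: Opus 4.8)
The plan is to prove both directions of the equivalence between ergodicity and the infinite flow property for a class $\M$ model $\{W(k)\}$ with weak feedback property. The necessity direction (ergodicity $\Rightarrow$ infinite flow) is already available: by Lemma~\ref{lemma:extendedinfinite}, the ergodicity classes of an independent random model are contained in the connected components of its infinite flow graph $\Ginf$, so if the model is ergodic then $\Ginf$ must be connected, which is exactly the infinite flow property. Thus the real content is the sufficiency direction, and I would devote the bulk of the argument to showing that infinite flow property $\Rightarrow$ ergodicity for a model in $\M$ with weak feedback.

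For sufficiency, I would exploit the defining inequality of the class $\M$ together with the weak feedback property. The starting point is that, by the definition of $\M$, for any $t_0$ and any $x(t_0)$ we have $\sum_{k=t_0}^{\infty}\sum_{i<j}H_{ij}(k)\EXP{(x_i(k)-x_j(k))^2}<\infty$, where $H(k)=\EXP{W^T(k)W(k)}$. The weak feedback property gives a lower bound of the form $H_{ij}(k)=\EXP{W^i(k)^TW^j(k)}\ge\gamma(\EXP{W_{ij}(k)}+\EXP{W_{ji}(k)})=\gamma\,\EXP{\bar W_{ij}(k)+\bar W_{ji}(k)}$, so the finiteness of the $\M$-sum forces $\sum_{k}(\EXP{\bar W_{ij}(k)}+\EXP{\bar W_{ji}(k)})\EXP{(x_i(k)-x_j(k))^2}$ to be finite whenever the variations do not vanish fast enough. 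The key idea is that for any nontrivial $S\subset[m]$, the infinite flow property of the expected chain (equivalent to infinite flow of $\{W(k)\}$, and implied by connectivity of $\Ginf$) gives $\sum_k \bar W_S(k)=\infty$, so along some pair crossing the cut the coefficients $\EXP{\bar W_{ij}(k)}+\EXP{\bar W_{ji}(k)}$ are summably infinite; combined with the finiteness of the weighted variation sum, this should force the boundary differences $x_i(k)-x_j(k)$ across every cut to tend to zero, i.e.\ drive the model to consensus.

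The cleanest route, and the one I would follow, is to reduce to the already-established infinite flow theorem via Lemma~\ref{lemma:M2approx} and the Approximation Lemma rather than re-deriving the limit directly. By construction, every model in $\M$ arises as (or can be related to) an $\ell_1$-approximation of a model $\{\tilde W(k)\}$ having a common steady state $\pi>0$ in expectation and weak feedback property; such $\{\tilde W(k)\}$ falls under Theorem~\ref{thm:infflowthm}, for which infinite flow is equivalent to ergodicity. The plan is then: (i) produce the associated chain $\{\tilde W(k)\}$ satisfying the hypotheses of Theorem~\ref{thm:infflowthm}; (ii) observe that $\ell_1$-approximation preserves the infinite flow property, so $\{\tilde W(k)\}$ has infinite flow iff $\{W(k)\}$ does; (iii) apply Theorem~\ref{thm:infflowthm} to conclude $\{\tilde W(k)\}$ is ergodic, hence by the Approximation Lemma (Lemma~\ref{lemma:approximation}, in its random-model form) the original $\{W(k)\}$ has the same ergodicity classes and is therefore ergodic as well. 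Step (i) is the main obstacle: an arbitrary $\M$ model with weak feedback need not itself be an $\ell_1$-approximation of a nice $\pi>0$ model, so I expect the hardest part to be either exhibiting such a companion chain or, failing that, proving the consensus limit directly from the $\M$-inequality and weak feedback as sketched in the previous paragraph. I would carry out the direct argument as a fallback, using the Borel–Cantelli/supermartingale machinery in the spirit of Lemma~\ref{lemma:M2approx} to upgrade the summability of the weighted squared differences into almost sure convergence of $x_i(k)-x_j(k)$ to zero across each cut of $\Ginf$.
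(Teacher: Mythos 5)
Your necessity direction matches the paper exactly (Lemma~\ref{lemma:extendedinfinite} forces the ergodicity classes inside connected components of $\Ginf$, so ergodicity implies connectivity, i.e., infinite flow). The problem is sufficiency. Your primary route --- manufacture a companion chain $\{\tilde W(k)\}$ with a common steady state $\pi>0$ in expectation and weak feedback, of which $\{W(k)\}$ is an $\ell_1$-approximation, then apply Theorem~\ref{thm:infflowthm} and the Approximation Lemma --- cannot be carried out, and you flag this yourself. Lemma~\ref{lemma:M2approx} only gives the inclusion ``$\ell_1$-approximation of a $\pi>0$ chain $\Rightarrow$ class $\M$''; the converse is neither claimed nor true in general (the paper's own example of bounded-connectivity deterministic chains with uniformly positive entries lies in $\M$ without any common steady state in expectation being available). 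Since Theorem~\ref{thm:sufficient} is precisely the tool the paper uses to go \emph{beyond} the $\ell_1$-approximation setting, reducing it back to that setting is not an option: step (i) of your plan is a genuine, unfillable gap.

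That leaves your fallback, which is indeed the paper's actual argument, but your sketch omits its two essential ingredients. First, the paper uses independence of $W(k)$ from $x(k)$ to write $H_{ij}(k)\EXP{(x_i(k)-x_j(k))^2}\ge\gamma\,\EXP{(W_{ij}(k)+W_{ji}(k))(x_i(k)-x_j(k))^2}$, and then the monotone convergence theorem to pull the sum inside the expectation, yielding the \emph{almost sure} statement $\sum_{k}\sum_{i<j}(W_{ij}(k)+W_{ji}(k))(x_i(k)-x_j(k))^2<\infty$. Second --- and this is the real missing idea --- passing from this a.s.\ summability plus the a.s.\ infinite flow property to $x_{\max}(k)-x_{\min}(k)\to0$ is not automatic: from $\sum_k a_k=\infty$ and $\sum_k a_k b_k<\infty$ with $a_k,b_k\ge0$ one only gets that $b_k$ is small along suitable subsequences, not that $b_k\to0$. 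The paper closes this gap by invoking Lemma~3 of \cite{CDC2010}, a separate nontrivial result about the dynamics, and your proposal neither cites it nor supplies a substitute; the Robbins--Siegmund/supermartingale machinery you point to from Lemma~\ref{lemma:M2approx} addresses class-$\M$ membership, not this consensus step. (A final small point: ergodicity, not just consensus, requires running the same argument from every starting time $t_0$, which is exactly why the $\M$-inequality is stated for all $t_0$.)
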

\begin{proof}
The necessity of the infinite flow property follows by 
Lemma~\ref{lemma:extendedinfinite}. For the converse, assume that the model 
has the infinite flow property. Let $t_0=0$ and let $x(0)\in\mathbb{R}^m$
be arbitrary. Then, since $\{W(k)\}$ is in the $\M$-class, it follows
\[\sum_{k=0}^{\infty}\sum_{i<j}H_{ij}(k)\EXP{(x_i(k)-x_j(k))^2}<\infty,\] 
where $H(k)=\EXP{W^T(k)W(k)}$. Due to the weak feedback property, we have 
$\EXP{W^i(k)^TW^j(k)}\geq \gamma \EXP{W_{ij}(k)+W_{ji}(k)}$ for 
all $k\ge0$, all $i,j\in[m]$ and some $\gamma>0$. 
Using the independence of the model and relation 
$H_{ij}(k)=\EXP{W^i(k)^TW^j(k)}$, we obtain
\begin{align*}
&\sum_{k=0}^{\infty}\sum_{i<j}\EXP{(W_{ij}(k)+W_{ji}(k))(x_i(k)-x_j(k))^2}\cr 
&= \EXP{\sum_{k=0}^{\infty}\sum_{i<j}(W_{ij}(k)+W_{ji}(k))(x_i(k)-x_j(k))^2} 
<\infty,
\end{align*}
where the equality holds by 
$(W_{ij}(k)+W_{ji}(k))(k)(x_i(k)-x_j(k))^2\ge0$ and the monotone convergence 
theorem (\cite{Folland}, page 50). Consequently,
\[\sum_{k=0}^{\infty}
\sum_{i<j}(W_{ij}(k)+W_{ji}(k))(k)(x_i(k)-x_j(k))^2<\infty\qquad 
\mbox{almost surely}.\] 
Since the model has the infinite flow property, 
by Lemma~3 in \cite{CDC2010}, we have 
$\lim_{k\rightarrow\infty}(x_{\max}(k)-x_{\min}(k))=0$ almost surely for  
any $x(0)\in\re^m$, implying that the system reaches consensus 
almost surely. Since 
$\sum_{k=t}^{\infty}\sum_{i<j}H_{ij}(k)\EXP{(x_i(k)-x_j(k))^2}<\infty$ for any 
starting time $t_0\geq 0$, by the same argument it follows that the model is 
ergodic. 
\end{proof}

Theorem~\ref{thm:sufficient} implies that the domain of the infinite 
flow theorem (Theorem~\ref{thm:infflowthm}) can be extended for
the chains with weak feedback property to a larger class of 
the $\M$ models that do not require the existence of a common
steady state vector $\pi>0$ in expectation. 
This is done in the following theorem.
\begin{theorem}\label{thm:infflowthmM2}
Theorem~\ref{thm:infflowthm} applies to any random model that belongs to 
the class $\M$ and has weak feedback property.
\end{theorem}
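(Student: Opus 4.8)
The plan is to show that Theorem~\ref{thm:infflowthm} (the infinite flow theorem) applies to the class $\M$ with weak feedback property by establishing the equivalence of the four conditions (a)--(d) for such models. The key observation is that Theorem~\ref{thm:sufficient} already gives us the hardest part of this equivalence for the random model itself: for a class $\M$ model with weak feedback property, the infinite flow property is both necessary and sufficient for ergodicity, i.e., (a)$\Leftrightarrow$(b). So the remaining work is to tie in the expected-model conditions (c) and (d) and confirm the full cycle of equivalences.

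First I would recall that, by Definition~\ref{def:infiniteflowgraph} and the definition of infinite flow, conditions (b) and (c) are statements about $\sum_{k=0}^{\infty}W_S(k)$ versus $\sum_{k=0}^{\infty}\EXP{W(k)}_S$. For an independent model these are linked by the same tail/monotone-convergence reasoning used in the proof of Theorem~\ref{thm:sufficient}: applying the monotone convergence theorem to the nonnegative terms $W_S(k)$ shows that $\EXP{\sum_k W_S(k)}=\sum_k\EXP{W_S(k)}=\sum_k\EXP{W(k)}_S$, so the expected chain has infinite flow along $S$ exactly when the random chain has infinite flow along $S$ almost surely (using that the event is a tail event and hence has probability zero or one). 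This gives (b)$\Leftrightarrow$(c) for any independent model and does not even require the $\M$ or weak-feedback hypotheses.

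Next I would handle (c)$\Leftrightarrow$(d), the equivalence of infinite flow and ergodicity \emph{for the expected model}. The expected chain $\{\EXP{W(k)}\}$ is a deterministic stochastic chain, and it inherits weak feedback in expectation from the random model. The subtle point is whether the expected chain itself lies in class $\M$, or whether one can invoke Theorem~\ref{thm:sufficient} (or the original Theorem~\ref{thm:infflowthm}) directly on it; I expect the cleanest route is to observe that a deterministic chain with weak feedback property to which the defining $\M$-inequality~\eqref{eqn:M2definition} applies falls under Theorem~\ref{thm:sufficient}, so that for the expected chain infinite flow is again equivalent to ergodicity. Combined with the almost-sure equivalence (a)$\Leftrightarrow$(b) from Theorem~\ref{thm:sufficient} and the bridge (b)$\Leftrightarrow$(c), all four conditions close into a single equivalence class.

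The main obstacle I anticipate is the passage between the random model and its expected model in condition (d): verifying that the expected chain genuinely satisfies the hypotheses needed to apply the sufficiency result, rather than merely sharing the infinite flow property. In the original Theorem~\ref{thm:infflowthm} this passage is mediated by the common steady state $\pi>0$ in expectation, which the $\M$-class deliberately drops; so one must argue that weak feedback in expectation together with membership in $\M$ is enough to transfer ergodicity conclusions to the deterministic expected chain. If a direct argument is awkward, the fallback is to prove the theorem as the two-way statement (a)$\Leftrightarrow$(b) only for the random model plus the tail-event identity (b)$\Leftrightarrow$(c), and then note that (d) follows because the expected model is itself a deterministic member of $\M$ with weak feedback, to which Theorem~\ref{thm:sufficient} applies verbatim. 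I expect the proof in the paper to be quite short, essentially citing Theorem~\ref{thm:sufficient} and the tail/monotone-convergence bridge.
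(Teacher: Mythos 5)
Your starting point coincides with the paper's: Theorem~\ref{thm:sufficient} supplies the one hard implication, namely that infinite flow of the random model implies its ergodicity (closing (b)$\Rightarrow$(a)), and everything else should be soft. The genuine gap is in how you bring condition (d) into the equivalence. You propose to prove (c)$\Leftrightarrow$(d) by applying the sufficiency direction of Theorem~\ref{thm:sufficient} to the expected chain $\{\EXP{W(k)}\}$, asserting that this chain ``is itself a deterministic member of $\M$ with weak feedback.'' Neither property is inherited from the random model. Weak feedback of $\W$ is a statement about $\EXP{W^i(k)^TW^j(k)}$, whereas weak feedback of the expected chain concerns $\EXP{W^i(k)}^T\EXP{W^j(k)}$; the two differ by covariance terms of arbitrary sign, so no inequality transfers. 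Likewise, membership of the expected chain in $\M$ would require $\sum_k\sum_{i<j}[\bar W^T(k)\bar W(k)]_{ij}(\bar x_i(k)-\bar x_j(k))^2<\infty$ for the deterministic dynamics $\bar x(k)$, and this does not follow from Eq.~\eqref{eqn:M2definition}, since $\EXP{W^T(k)W(k)}$ neither dominates nor is dominated by $\EXP{W(k)}^T\EXP{W(k)}$ in general. You correctly identified this as the main obstacle, but your fallback does not remove it --- it restates the same unproven claim.

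The paper's proof never applies sufficiency to the expected chain; it closes a cycle rather than proving pairwise equivalences. The only new implication is (b)$\Rightarrow$(a), from Theorem~\ref{thm:sufficient}; the remaining implications (a)$\Rightarrow$(d)$\Rightarrow$(c)$\Rightarrow$(b) hold for \emph{any} independent random model and are cited from~\cite{ErgodicityPaper}: (a)$\Rightarrow$(d) by the dominated convergence theorem (by independence, the expected product is the product of the expected matrices); (d)$\Rightarrow$(c) by the \emph{necessity} of infinite flow for ergodicity of a deterministic chain (Lemma~\ref{lemma:extendedinfinite}), which requires no hypotheses at all; and (c)$\Rightarrow$(b) by Kolmogorov's three-series theorem, using independence and $W_{ij}(k)\in[0,1]$. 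This is precisely the trick you are missing: on the expected chain one only ever needs necessity, never sufficiency. A secondary issue: your bridge from (c) to (b) is underjustified. Monotone convergence plus the $0$--$1$ law yields only (b)$\Rightarrow$(c), because an almost surely finite sum can still have infinite expectation; the converse direction genuinely needs the three-series theorem (or a variance/Chebyshev argument), which is where boundedness of the entries enters. With these two repairs --- use necessity on the expected chain and close the loop as a cycle, and invoke three-series for (c)$\Rightarrow$(b) --- your outline becomes the paper's argument.
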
 
\begin{proof}
By Theorem~\ref{thm:sufficient}, (d) implies (a). 
The implications (a) $\Rightarrow$ (b) $\Rightarrow$ (c) $\Rightarrow$ 
(d) are true for any independent random model as proven 
in~\cite{ErgodicityPaper} (Theorem~7).
\end{proof}
With this theorem we conclude our discussion on ergodic models.
In the following section, we shift our focus on the models that are not 
necessarily ergodic.
\section{Ergodicity Classes and Infinite Flow Graph}\label{sec:infiniteflow}
In this section, we study models with a common steady state 
$\pi>0$ in expectation and weak feedback property that are not necessarily 
ergodic, which is equivalent to \textit{not having infinite flow property}. 
Our goal is to investigate the limiting behavior of 
such models and, in particular, to characterize their ergodicity classes. 
We do this by considering the infinite flow graph of a model. To illustrate
what our goal is,  recall the gossip model of Section~\ref{sec:gossip}. 
In Theorem~\ref{thm:gossip}, 
we showed that the ergodicity classes of the model are related to
the connected components of the infinite flow graph of the model.
Here, we show that the same result holds for a more general independent random
model. In the process, we use {diagonal approximation} of the model.

In particular, consider an independent random model  $\W$ 
with a common steady state $\pi>0$ in expectation and weak feedback property. 
Let $\Ginf$ be the infinite flow graph of the model. Assume that 
$\Ginf$ has $\tau\geq 1$ connected components, and let 
$S_1,\ldots,S_\tau\subset [m]$ be the sets of vertices of the connected 
components in $\Ginf$. Let 
$S_1=\{1,\ldots,a_1\}$, $S_2=\{a_1+1,\ldots,a_2\},\ldots,$ 
$S_\tau=\{a_{\tau-1}+1,\ldots,a_{\tau}=m\}$ for 
$1\leq a_1\leq\ldots\leq a_{\tau}=m$, and let $m_r=|S_r|=a_r-a_{r-1}$ 
be the number of vertices in the $r$th component, where $a_0=0$. 
Using the connected components of $\Ginf$, we define
 the \textit{diagonal approximation} 
$\{\tilde{W}(k)\}$ of $\W$, as follows.

\begin{definition}(\textit{Diagonal Approximation})
Let $\W$ be a random model.
For $1\leq r\leq \tau$, let the random model $\{W^{(r)}(k)\}$ 
in $\R^{m_r}$ be given as follows: for $i,j\in [m_r]$,
\begin{align}\label{eqn:diagonalmatrix}
W^{(r)}_{ij}(k)=
\left\{
\begin{array}{ll}
W_{(i+a_{r-1})(i+a_{r-1})}(k)+\sum_{\ell\in \bar{S_r}}W_{(i+a_{r-1})\ell}(k)
& \mbox{if $j=i$,}\\
W_{(i+a_{r-1})(j+a_{r-1})}(k)& \mbox{if $j\not=i$.}
\end{array}\right.
\end{align}
The diagonal approximation of the model $\W$ is 
the model $\{\tilde{W}(k)\}$ defined by 
\begin{align*}
\tilde{W}(k)=\diag(W^{(1)}(k),\ldots,W^{(\tau)}(k))=\left(
\begin{array}{cccc}
W^{(1)}(k)&0&\cdots&0\\
0&W^{(2)}(k)&\cdots&0\\
\vdots&\vdots&\ddots&\vdots\\
0&0&\ldots&W^{(\tau)}(k)
\end{array}\right).
\end{align*}
\end{definition}

Basically, in the diagonal approximation, the links between the connected 
components are removed. At the same time, 
in order to preserve the stochasticity of the matrices, 
the weights of  the removed links are added to the self-feedback weight
of corresponding agents.

We now present Lemma~\ref{lemma:diagonalapprox} which 
provides basic properties of diagonal approximation. 
The lemma shows that the coupling between the
connected components is weak enough to guarantee that
diagonal approximation is an $\ell_1$-approximation. 
At the same time, the coupling within
each of the diagonal submodels is rather strong, 
as each submodel possesses infinite flow property.

\begin{lemma}\label{lemma:diagonalapprox}
Let $\{W(k)\}$ be an independent random model and $\{\tilde{W}(k)\}$ be 
its diagonal approximation. Then, $\{\tilde{W}(k)\}$ is an 
$\ell_1$-approximation of $\{W(k)\}$. Furthermore, for every $r=1,\ldots,\tau$,
the random model $\{W^{(r)}(k)\}$ as given in Eq.~\eqref{eqn:diagonalmatrix} 
has infinite flow property. 
\end{lemma}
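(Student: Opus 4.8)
The plan is to prove the lemma in two independent parts, matching the two assertions in the statement.

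For the first assertion (that $\{\tilde W(k)\}$ is an $\ell_1$-approximation of $\{W(k)\}$), I would compute the entrywise differences $|W_{ij}(k)-\tilde W_{ij}(k)|$ directly from the defining formula~\eqref{eqn:diagonalmatrix}. The key observation is that $\tilde W(k)$ differs from $W(k)$ only in the cross-block positions: the off-diagonal entries between distinct components are zeroed out, and the corresponding mass is absorbed into the diagonal. Concretely, for $i\ne j$ with $i,j$ in different components we have $\tilde W_{ij}(k)=0$, so the difference is exactly $W_{ij}(k)$; and for a diagonal entry $i=j\in S_r$ the difference is $\sum_{\ell\in\bar S_r}W_{i\ell}(k)$. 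All of these differences are therefore bounded above by sums of cross-component entries of the form $W_{pq}(k)$ with $p,q$ in different connected components of $\Ginf$. Summing over $k$, each such term contributes $\sum_{k=0}^\infty W_{pq}(k)$, and since $\{p,q\}\notin\Einf$ (they lie in different components of the infinite flow graph), the definition of $\Ginf$ (Definition~\ref{def:infiniteflowgraph}) gives $\sum_{k=0}^\infty(W_{pq}(k)+W_{qp}(k))<\infty$ almost surely. Because $[m]$ is finite, there are only finitely many such cross-component pairs, so $\sum_{k=0}^\infty|W_{ij}(k)-\tilde W_{ij}(k)|<\infty$ almost surely for every $i,j$, which is exactly the $\ell_1$-approximation property for random chains.

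For the second assertion (that each submodel $\{W^{(r)}(k)\}$ has infinite flow property), I would argue by contradiction. Suppose some $\{W^{(r)}(k)\}$ fails to have infinite flow; then there is a nontrivial $T\subset[m_r]$ with $\sum_{k=0}^\infty W^{(r)}_T(k)<\infty$ with positive probability. Translating $T$ back to the index set $S_r\subset[m]$ via the shift $i\mapsto i+a_{r-1}$, and noting that the cross terms between $T$ and its complement inside $S_r$ are unchanged by the diagonal construction (only cross-component terms were modified), this says there is a set $T'$ splitting the vertices of $S_r$ with finite total flow across the $(T',S_r\setminus T')$ cut. The hard part will be combining this with the already-established cross-component summability: since the flow between $S_r$ and every other component $S_{r'}$ is also almost surely finite (again by the definition of $\Ginf$), I can glue $T'$ together with all the other components to produce a nontrivial subset $\hat S\subset[m]$ whose total flow $\sum_{k=0}^\infty W_{\hat S}(k)$ is finite with positive probability. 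But such a cut in $[m]$ with finite flow means that $T'$ and $S_r\setminus T'$ lie in different connected components of $\Ginf$, contradicting the fact that $S_r$ is (by construction) a single connected component of $\Ginf$.

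The step I expect to be the main obstacle is this last gluing argument: I need to be careful that the edges inside $S_r$ are preserved exactly by the diagonal approximation (so that a finite-flow cut of $W^{(r)}$ is genuinely a finite-flow cut within $S_r$ for the original chain $W$), and that assembling $\hat S$ from $T'$ and the other components really yields a cut of $[m]$ all of whose crossing edges have finite flow. Once the cross-component finiteness from the first part is in hand, the bookkeeping reduces to the identity $W_{\hat S}(k)=W^{(r)}_{T}(k)+(\text{cross-component terms})$, and the contradiction with the maximality and connectedness of the component $S_r$ follows cleanly.
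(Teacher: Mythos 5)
Your proposal is correct, and its first half is essentially the paper's own argument: the paper likewise computes the entrywise differences (zero for distinct indices inside a component, $W_{ij}(k)$ for cross-component pairs, $\sum_{\ell\in\bar S_r}W_{i\ell}(k)$ on the diagonal), bounds the row sums by $2W_{S_r}(k)$, and sums over $r$ and $k$ using the almost sure finiteness of cross-component flows. (Both you and the paper silently use independence here: by Definition~\ref{def:infiniteflowgraph}, $\{p,q\}\notin\Einf$ only says the flow is \emph{not almost surely infinite}; upgrading this to ``almost surely finite'' requires Kolmogorov's 0-1 law, which applies since the model is independent and the divergence event is a tail event.)

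For the second half you take a genuinely different, though more roundabout, route. The paper argues directly: given any nonempty proper $V\subset S_r$, connectivity of the component $S_r$ in $\Ginf$ yields an edge $\{i,j\}\in\Einf$ crossing the cut, the diagonal construction preserves intra-component entries, so $\sum_k\bigl(W^{(r)}_{i_rj_r}(k)+W^{(r)}_{j_ri_r}(k)\bigr)=\sum_k\bigl(W_{ij}(k)+W_{ji}(k)\bigr)=\infty$ a.s., and the submodel has infinite flow. You argue by contradiction and glue a global cut $\hat S$ of $[m]$ out of $T'$ and the other components. This works, but the gluing step you flag as ``the main obstacle'' is actually unnecessary: once you know the intra-component entries are preserved, finiteness (with positive probability) of the flow across $(T',S_r\setminus T')$ already implies, pair by pair, that no edge of $\Einf$ crosses that cut --- and since no $\Einf$-edge leaves $S_r$ at all (components are maximal), $T'$ and $S_r\setminus T'$ would be disconnected in $\Ginf$, contradicting connectivity of $S_r$ without ever forming $\hat S$. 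Note also that showing an edge is \emph{not} in $\Einf$ needs only positive probability of finiteness (no 0-1 law), whereas your gluing step invokes a.s.\ finiteness of cross-component flow, which again needs independence. One last small omission relative to the paper: the paper first verifies that each $\tilde W(k)$ is in fact a stochastic matrix (nonnegativity and row sums one), which is needed for the diagonal approximation to qualify as a chain; your write-up takes this for granted.
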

\begin{proof}
First we show that $\tilde{W}(k)$ is a stochastic matrix for any $k\geq 0$. 
Due to the diagonal structure of the matrix $\tilde{W}(k)$ 
(Eq.~(\ref{eqn:diagonalmatrix})), it suffices to show that $W^{(r)}(k)$ is 
stochastic for $1\leq r\leq \tau$. By the definition of 
$W^{(r)}(k)$, we have $W^{(r)}(k)\geq 0$. Also, for any $i\in [m_r]$, we have
\begin{align*}
\sum_{j=1}^{m_r}W^{(r)}_{ij}(k)&=W^{(r)}_{ii}(k)
+\sum_{j\not=i,j\in[m_r]}W^{(r)}_{ij}(k)\cr 
&=W_{(i+a_{r-1})(i+a_{r-1})}(k)+\sum_{\ell\in \bar{S_v}}W_{(i+a_{r-1})\ell}(k)+
\sum_{\ell\not=i+a_{r-1},\ell\in S_r}W_{(i+a_{r-1})\ell}(k)\cr 
&=\sum_{\ell=1}^{m}W_{(i+a_r)\ell}(k)=1.
\end{align*}

Now, let $i\in S_r$ for $1\leq r\leq \tau$. Then, for any $j\not=i$, we have 
two cases:\\
(i) If $j\in S_r$, by the definition of $\tilde{W}(k)$, then
$W_{ij}(k)=\tilde{W}_{ij}(k)$. Hence, $|W_{ij}(k)-\tilde{W}_{ij}(k)|=0$. \\
(ii) If $j\not\in S_r$, then $\tilde{W}_{ij}(k)=0$ and, thus, 
$|W_{ij}(k)-\tilde{W}_{ij}(k)|=W_{ij}(k)$. \\
For $j=i$, we have $\tilde{W}_{ii}(k)
=W_{ii}(k)+\sum_{j\not\in S_r}W_{ij}(k)$. Hence, $|W_{ij}(k)-\tilde{W}_{ij}(k)|
=\sum_{j\not\in S_r}W_{ij}(k)$, implying that
$\sum_{j=1}^{m}|W_{ij}(k)-\tilde{W}_{ij}(k)|=2\sum_{j\not\in S_r}W_{ij}(k).$
By summing these relations over all $i\in S_r$, we obtain
for all $r=1,\ldots,\tau$,
\[\sum_{i\in S_r}\sum_{j=1}^{m}|W_{ij}(k)-\tilde{W}_{ij}(k)|
=2\sum_{i\in S_r}\sum_{j\not\in S_r}W_{ij}(k)\leq 2W_{S_r}(k).\]
Again, by summing the preceding inequalities  over $r=1,\ldots,\tau$, we 
further obtain
\[\sum_{r=1}^{\tau}\sum_{i\in S_r}\sum_{j=1}^{m}|W_{ij}(k)-\tilde{W}_{ij}(k)|
\leq 2\sum_{r=1}^{\tau}W_{S_r}(k).\]
Note that $\sum_{r=1}^{\tau}\sum_{i\in S_r}
\sum_{j=1}^{m}|W_{ij}(k)-\tilde{W}_{ij}(k)|
=\sum_{i,j\in[m]}|W_{ij}(k)-\tilde{W}_{ij}(k)|$. Since
$S_1,\ldots,S_\tau$ are the sets of vertices of the connected components of 
$\Ginf$, it follows that $\sum_{k=0}^{\infty}\sum_{r=1}^{\tau}W_{S_r}(k)
<\infty$ almost surely. Therefore, by combining the above facts, we conclude
that
\[\sum_{k=0}^{\infty}\sum_{i,j\in[m]}^{m}|W_{ij}(k)-\tilde{W}_{ij}(k)|
<\infty \quad a.s.,\]
which proves that $\{\tilde{W}(k)\}$ is an $\ell_1$-approximation of 
$\{W(k)\}$. 

To prove that each submodel $\{W^{(r)}(k)\}$ has infinite flow property, let 
$V\subset S_r$ be nonempty but arbitrary. Since $S_r$ is the set of 
vertices of the $r$th connected component of $\Ginf$, there is
an edge $\{i,j\}\in\Einf$ such that $i\in V$ and $j\in \bar{V}$. 
By the definition of $W^{(r)}(k)$, for $i_r=i-a_{r-1}$ and $j_r=j-a_{r-1}$, 
we have $W^{(r)}_{i_rj_r}(k)+W^{(r)}_{j_ri_r}(k)=W_{ij}(k)+W_{ji}(k)$. 
Since $\{i,j\}\in\Einf$, it follows
\[\sum_{k=0}^{\infty}(W^{(r)}_{i_rj_r}(k)+W^{(r)}_{j_ri_r}(k))=
\sum_{k=0}^{\infty}(W_{ij}+W_{ji})=\infty,\]
thus showing that the infinite flow graph of $\{W^{(r)}(k)\}$ is connected. 
Hence, $\{W^{(r)}(k)\}$ has infinite flow property. 
\end{proof}

Lemma~\ref{lemma:diagonalapprox}, together with approximation lemma 
(Lemma~\ref{lemma:approximation}), provides us with basic tools
for our study on the relations between the ergodicity classes and the
infinite flow graph. Having these lemmas, we are now ready to characterize 
these relations for a certain random models. 

Recall that in Lemma~\ref{lemma:extendedinfinite}, we showed that 
if $i$ and $j$ are mutually weakly ergodic, then $i$ and $j$ belong to 
the same connected component of the infinite flow graph. 
Since mutual ergodicity is more restrictive than mutual weak ergodicity, 
Lemma ~\ref{lemma:extendedinfinite} is valid
when $i$ and~$j$ are mutually ergodic. The existence of models for which 
the converse statement holds is ensured by Theorem~\ref{thm:gossip}, which 
shows that the extended gossip model is one of them.
The following theorem provides a characterization of these models 
in a more general setting than the gossip.

\begin{theorem}\label{thrm:infiniteflowgraph}
Let $\{W(k)\}$ be an independent model with a common steady state 
$\pi>0$ in expectation and weak feedback property. Then, $i\lrA j$ 
if and only if $i$ and $j$ are in the same connected component of 
the infinite flow graph $\Ginf$ of the model. 
\end{theorem}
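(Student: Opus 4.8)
The plan is to prove the two implications separately, with the forward direction essentially free and the converse carrying all the weight.

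For the forward direction, suppose $i\lrA j$. By Definition~\ref{def:differentergodic}(c), mutual ergodicity includes mutual weak ergodicity, so $i\lra j$. Lemma~\ref{lemma:extendedinfinite}, in the independent-random-model form noted right after its proof, then forces $i$ and $j$ into the same connected component of $\Ginf$. This step uses nothing beyond what is already established.

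For the converse, suppose $i$ and $j$ lie in the same component $S_r$ of $\Ginf$. I would pass to the diagonal approximation $\{\tw(k)\}$. By Lemma~\ref{lemma:diagonalapprox}, $\{\tw(k)\}$ is an $\ell_1$-approximation of $\{W(k)\}$ and is block diagonal, $\tw(k)=\diag(W^{(1)}(k),\ldots,W^{(\tau)}(k))$, with each block having the infinite flow property. Since $\{W(k)\}$ has a common steady state $\pi>0$ in expectation, Lemma~\ref{lemma:M2approx} gives $\{\tw(k)\}\in\M$; as $\tw(k)$ is block diagonal, so is $H(k)=\EXP{\tw^T(k)\tw(k)}$, hence the defining sum of $\M$ splits over the blocks and each submodel $\{W^{(r)}(k)\}$ inherits membership in $\M$ (take initial states supported on a single block). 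The goal is then to show each block is ergodic and that every coordinate in it converges; combined with the Approximation Lemma (Lemma~\ref{lemma:approximation}), which transfers \emph{both} mutual weak ergodicity and the ergodic-index property across $\ell_1$-approximations, this yields $a\lrA_W b$ for all $a,b\in S_r$, and in particular $i\lrA_W j$.

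The crux, and the step I expect to be the main obstacle, is that the diagonal augmentation preserves neither weak feedback nor the steady state \emph{exactly}, so one must show that the finiteness of the total cross-component flow compensates for both losses. To invoke Theorem~\ref{thm:sufficient} for a block one needs weak feedback: since the augmentation keeps $\tw_{ab}(k)=W_{ab}(k)$ for $a\ne b$ in $S_r$ and only enlarges diagonal entries, the column inner products obey $\EXP{\tw^a(k)^T\tw^b(k)}\ge\EXP{\sum_{\ell\in S_r}W_{\ell a}(k)W_{\ell b}(k)}=\EXP{W^a(k)^TW^b(k)}-\EXP{\sum_{\ell\notin S_r}W_{\ell a}(k)W_{\ell b}(k)}$ for $a,b\in S_r$. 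The subtracted cross-block term is bounded by $\EXP{W_{S_r}(k)}$, whose time-sum is finite (the almost-sure summability of $W_{S_r}$, which defines the components of $\Ginf$, upgrades to summability of $\sum_k\EXP{W_{S_r}(k)}$ by Kolmogorov's three-series theorem for the independent, bounded matrices $W(k)$). Thus pointwise weak feedback can genuinely fail, but carrying this summable correction through the variance argument behind Theorem~\ref{thm:sufficient} still yields $\sum_k\sum_{a<b}\EXP{(\tw_{ab}(k)+\tw_{ba}(k))(x_a(k)-x_b(k))^2}<\infty$, which with the block's infinite flow property (via Lemma~3 of~\cite{CDC2010}) gives within-$S_r$ consensus almost surely. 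For the ergodic-index requirement I would use that $\pi^Tx(k)$ is a bounded martingale for $\{W(k)\}$, and that $\pi$ restricted to $S_r$ is an \emph{approximate} common steady state of the block, the steady-state defect being again dominated by the summable inter-component flow; a Robbins--Siegmund argument makes the block's weighted average convergent, and consensus promotes this to convergence of every coordinate in $S_r$. Transferring both conclusions through Lemma~\ref{lemma:approximation} completes the converse, the entire delicate bookkeeping reducing to the fact that the total flow leaving each component of $\Ginf$ is finite.
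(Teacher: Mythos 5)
Your proposal is correct, and in the hard direction it takes a genuinely different route from the paper's. Both arguments share the same skeleton: Lemma~\ref{lemma:extendedinfinite} for the direction ``$i\lrA j$ implies same component,'' the diagonal approximation $\{\tw(k)\}$ with Lemmas~\ref{lemma:diagonalapprox} and~\ref{lemma:M2approx}, the block decomposition, and the Approximation Lemma (Lemma~\ref{lemma:approximation}) to transfer conclusions back to $\{W(k)\}$. They diverge exactly where you predicted the obstacle: the diagonal augmentation destroys exact weak feedback and the exact steady state. The paper repairs this with a \emph{second} $\ell_1$-approximation, blending each block with the uniform averaging matrix, $U^{(r)}(k)=(1-d(k))\tw^{(r)}(k)+\frac{d(k)}{m_r}e^{(m_r)}e^{(m_r)T}$, where the summable weights $d(k)=\frac{4m^2}{\pi_{\min}}M(k)$ are calibrated against the expected approximation error $M(k)$; it then proves $\{U(k)\}$ has \emph{exact} weak feedback with constant $\frac{1}{2}\min\left(\gamma,\frac{\pi_{\min}}{4m}\right)$ and invokes Theorem~\ref{thm:infflowthmM2} per block as a black box. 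You instead keep $\{\tw(k)\}$ itself, observe that weak feedback and the steady-state identity each fail only by an additive defect dominated by $\EXP{W_{S_r}(k)}$ (summable, since almost-sure summability of the cross-component flow upgrades to summability in expectation for independent bounded entries), and re-run the variance argument behind Theorem~\ref{thm:sufficient} with this correction: since the defect multiplies the uniformly bounded quantity $\EXP{(x_a(k)-x_b(k))^2}$, it contributes only a finite amount, and Lemma~3 of~\cite{CDC2010} still applies per block. Your key inequality $\EXP{\tw^a(k)^T\tw^b(k)}\geq \EXP{W^a(k)^TW^b(k)}-\EXP{W_{S_r}(k)}$ for $a,b\in S_r$ is valid (within-block off-diagonal entries are preserved and diagonal entries only grow), as is the Robbins--Siegmund treatment of the perturbed martingale $\pi^{(r)T}x^{(r)}(k)$. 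What each approach buys: the paper's construction keeps the proof modular (Theorem~\ref{thm:infflowthmM2} is cited verbatim, never re-opened) at the price of a second approximation and delicate feedback-constant bookkeeping; your route avoids the extra approximation and is arguably more transparent, at the price of re-proving a defect-tolerant variant of Theorem~\ref{thm:sufficient} rather than citing it. A side benefit of your version is that the explicit martingale step supplies the ergodic-index half of $\lrA$ (existence of the coordinate limits), a point the paper passes over quickly when it concludes $i\lrA j$ from the blocks being ``ergodic,'' since ergodicity in Definition~\ref{def:ergodicity} only controls the differences $x_i(k)-x_j(k)$.
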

\begin{proof}
Since mutual ergodicity implies mutual weak ergodicity,
the ``if'' part follows from Lemma~\ref{lemma:extendedinfinite}.
To show the ``only if'' part, we use two $\ell_1$-approximations successively. 
We proceed through the following steps to prove this result. First, in order to
have weak feedback property, we construct an $\ell_1$-approximation of 
the diagonal approximation of the model $\W$. Next, we prove that the
resulting chain has weak feedback property. Finally, complete the proof 
by making use of the results developed  in the preceding sections.

\noindent
\underline{Approximation}:
Let $\Ginf$ have $\tau$ connected components, and let $S_1,\ldots,S_\tau$
be the vertex sets corresponding to the connected components of $\Ginf$.
Let $\pi_{\min}=\min_{i\in[m]}\,\pi_i>0$. 
Consider the diagonal approximation $\{\tw(k)\}$ of $\{W(k)\}$ with 
$\tw^{(r)}(k)$ defined as in Eq.~\eqref{eqn:diagonalmatrix} 
for $r\in[\tau]$. Let $M(k)=\EXP{\max_{i,j\in[m]}|\tw_{ij}(k)-W_{ij}(k)|}$. 
Since $\{\tw(k)\}$ is an $\ell_1$-approximation of $\{W(k)\}$, we have  
$\sum_{k=0}^{\infty}M(k)<\infty$, implying that 
$\lim_{k\rightarrow\infty}M(k)=0$. Thus, there exists $N\geq 0$ 
such that $M(k)\leq \frac{\pi_{\min}}{8m^2}$ for all $k\geq N$. 
Now, let $U(k)=I$ for $k<N$ and for $k\ge N$,
\[U^{(r)}(k)=(1-d(k))\tw^{(r)}(k)+\frac{d(k)}{m_r}e^{(m_r)}e^{(m_r)T},\] 
where $d(k)=\frac{4m^2}{\pi_{\min}}M(k)$ for $k\geq 0$ and 
$e^{(m_r)}\in\R^{m_r}$ is the vector with all entries equal to 1. 
Note that $\frac{1}{m_r}e^{(m_r)}e^{(m_r)T}$ is a stochastic matrix.
Since $M(k)\leq \frac{\pi_{\min}}{8m^2}$ for $k\geq N$, we have 
$d(k)\in[0,\frac{1}{2}]$, thus implying that $U^{(r)}(k)$ is 
a convex combination of stochastic matrices and, hence, stochastic. 

Since $\sum_{k=0}^{\infty}d(k)<\infty$ it follows 
$\sum_{k=0}^{\infty}M(k)<\infty$, thus implying that the model 
$\{U^{(r)}(k)\}_{k\geq N}$ is an $\ell_1$-approximation of 
$\{\tw^{(r)}(k)\}_{k\geq N}$. 
Since the entries of each matrix $\tw^{(r)}(k)$ are in $[0,1]$, 
changing finitely many matrices in a chain cannot change infinite flow 
properties. Thus, $\{U^{(r)}(k)\}$ is an $\ell_1$-approximation of 
$\{\tw^{(r)}(k)\}$ and the model $\{U(k)\}$ with matrices defined by 
$U(k)=\diag(U^{(1)}(k),\ldots,U^{(\tau)}(k))$, $k\geq 0$, is an 
$\ell_1$-approximation of $\{\tw(k)\}$. By Lemma~\ref{lemma:diagonalapprox},
$\{\tw(k)\}$ is an $\ell_1$-approximation of the original model $\{W(k)\}$ 
and, therefore, $\{U(k)\}$ is an $\ell_1$-approximation of $\{W(k)\}$. 

\noindent
\underline{Weak Feedback}: Now, we show that the model $\{U(k)\}$ has weak 
feedback property with feedback coefficient 
$\zeta=\frac{1}{2}\min(\gamma,\frac{\pi_{\min}}{4m})$. For $k<N$, we have 
$U(k)=I$ which has weak feedback property with coefficient~$1$. So we
consider $U(k)$ for an arbitrary $k\geq N$. Let $r\in[\tau]$ 
be arbitrary and let $Q=U^{(r)}(k)$ to keep notation simple. 
For $i,j\in [m_r]$ recall that their corresponding indices in $[m]$ are given 
by $i_r=i+a_{r}-1$, $j_r=j+a_{r}-1$. Also, recall that $W^s$ denotes the $s$th 
column vector of a matrix $W$. Using this, for any $i,j\in [m_r]$
with $i\not=j$ we have:
\begin{align}\label{eqn:feedbackstartpoint}
{Q^i}^TQ^j&=
\left((1-d(k))\tw^{(r)i}(k)+\frac{d(k)}{m_r}e^{(m_r)}\right)^T 
\left((1-d(k))\tw^{(r)j}(k)+\frac{d(k)}{m_r}e^{(m_r)}\right)\cr 
&\geq (1-d(k))^2(\tw^{(r)i}(k))^T\tw^{(r)j}(k)
+\frac{(1-d(k))d(k)}{m}e^{(m_r)T}(\tw^{(r)i}(k)+\tw^{(r)j}(k)).\qquad
\end{align}
where in the last inequality we use $1-d(k)\ge0$ and
$\frac{1}{m_r}\ge \frac{1}{m}$.
Since $e^{(m_r)T}\tw^{(r)i}(k)=e^T\tw^{i_r}(k)$ it follows that
\begin{align*}
\frac{1}{m}e^{(m_r)T}\tw^{(r)i}(k)
&=\frac{1}{m}e^TW^{i_r}(k) + \frac{1}{m}e^T(\tw^{i_r}(k)-W^{i_r}(k))\cr
&\geq 
\frac{1}{m}e^TW^{i_r}(k) - \max_{i',j'\in[m]}|\tw_{i'j'}(k)-W_{i'j'}(k)|,
\end{align*}
where the inequality holds by the stochasticity of $\frac{1}{m}e$. Therefore, 
\[\EXP{e^{(m_r)T}\tw^{(r)i}(k)} \geq
\EXP{\frac{1}{m}e^TW^{i_r}(k)} 
- \EXP{\max_{i,'j'\in[m]}|\tw_{i'j'}(k)-W_{i'j'}(k)|}=\frac{1}{m}\pi_i-M(k),\]
where the equality follows from $\pi$ being a common steady state 
in expectation for $\{W(k)\}$ and the definition of $M(k)$. Similarly, we have 
$\EXP{\frac{1}{m}e^{(m_r)T}\tw^{(r)j}(k)}\geq\frac{1}{m}\pi_j-M(k)$. 
Taking the expectation of the both sides in 
Eq.~\eqref{eqn:feedbackstartpoint} and using the preceding inequalities,
we obtain
\begin{align}\label{eqn:feedback2}
\EXP{Q^{iT}Q^j}&\geq (1-d(k))^2 \EXP{(\tw^{i_r}(k))^T\tw^{j_r}(k)}
+(1-d(k))d(k)\left(\frac{\pi_i+\pi_j}{m}-2M(k)\right)\cr 
&\geq (1-d(k))^2
\EXP{(\tw^{i_r}(k))^T\tw^{j_r}(k)}+(1-d(k))d(k)\frac{\pi_{\min}}{m},
\end{align}
which holds by $M(k)\leq\frac{\pi_{\min}}{8m^2}\leq \frac{\pi_{\min}}{m}$ 
for $k\geq N$ and $\pi_i,\pi_j\geq \pi_{\min}$ for any $i,j\in[m_r]$. 

Since $\tw_{\ell i_r}(k)\geq W_{\ell i_r}(k)-\max_{ij}|\tw_{ij}(k)-W_{ij}(k)|$
for all $\ell\in[m]$, it follows that
\[\EXP{(\tilde{W}^{i_r}(k))^T\tilde{W}^{j_r}(k)}
\geq \EXP{(W^{i_r}(k))^TW^{j_r}(k)}-2mM(k)
= \EXP{(W^{i_r}(k))^TW^{j_r}(k)}-d(k)\frac{\pi_{\min}}{2m},\]
where in the last equality we use $2mM(k)=d(k)\frac{\pi_{\min}}{2m}$,
which follows from $\frac{4m^2}{\pi_{\min}}M(k)=d(k)$.
Using the above relation in Eq.~(\ref{eqn:feedback2}), we have
\begin{align*}
\EXP{Q^{iT}Q^j}&\geq (1-d(k))^2\EXP{(W^{i_r}(k))^TW^{j_r}(k)}
-(1-d(k))^2 d(k)\frac{\pi_{\min}}{2m}+(1-d(k))d(k)\,\frac{\pi_{\min}}{m}. 
\end{align*}
Since $(1-d(k))^2\leq 1-d(k)$ it follows 
\begin{align}\label{eqn:feedback4}
\EXP{Q^{iT}Q^j}&\geq (1-d(k))^2\EXP{(W^{i_r}(k))^TW^{j_r}(k)} 
+(1-d(k))d(k)\,\frac{\pi_{\min}}{2m}\cr 
&\geq (1-d(k))^2\gamma\EXP{W_{i_rj_r}(k)+W_{j_ri_r}(k)}+(1-d(k))d(k)\,
\frac{\pi_{\min}}{2m},
\end{align}
where the last inequality follows by weak feedback property of $\{W(k)\}$.

 Since $i_r,j_r\in S_r$ and 
$i_r\not=j_r$, by the construction of $\tw(k)$, we have 
$\tw_{i_rj_r}(k)=W_{i_rj_r}(k)$. Hence, 
$\EXP{Q_{ij}+Q_{ji}}=(1-d(k))(\EXP{W_{i_rj_r}(k)}
+\EXP{W_{j_ri_r}(k)})+\frac{2d(k)}{m_r}$. 
By combining this with Eq.~(\ref{eqn:feedback4}), we have 
\begin{align*}
\EXP{Q^{iT}Q^j}&\geq 
(1-d(k))\gamma \left(\EXP{Q_{ij}+Q_{ji}}-\frac{2d(k)}{m_r}\right)
+ (1-d(k))d(k)\, \frac{\pi_{\min}}{2m}\cr 
&=(1-d(k))\gamma \EXP{Q_{ij}+Q_{ji}}+(1-d(k))d(k)
\left(-\frac{2\gamma}{m_r}+\frac{\pi_{\min}}{2m}\right).
\end{align*}
Without loss of generality, we may assume $\gamma\leq \frac{\pi_{\min}}{4m}$, 
otherwise, $\gamma'=\frac{\pi_{\min}}{4m}<\gamma$ would be a feedback 
coefficient for $\{W(k)\}$ and the arguments hold for $\gamma'$. But for 
$\gamma\leq \frac{\pi_{\min}}{4m}$, we have $-\frac{2\gamma}{m_r}
+\frac{\pi_{\min}}{2m}\geq 0$. Thus, 
\begin{align*}
\EXP{Q^{iT}Q^j}\geq \frac{\gamma}{2}\,\EXP{Q_{ij}+Q_{ji}},
\end{align*}
which follows from $d(k)\leq \frac{1}{2}$. 
Note that we defined $Q=U^{(r)}(k)$ where $r\in[\tau]$ and $k\geq N$ 
was arbitrary.
Hence, each of the decoupled random models $\{U^{(r)}(k)\}$ has weak feedback 
property with feedback constant $\zeta=\frac{1}{2}\min(\gamma,\frac{\pi_{\min}}{4m})$. 

\noindent
\underline{Last Step}:
Let $x(0)\in\Rm$ and $\{x(k)\}$ be dynamics resulting from
the chain $\{U(k)\}$ according to Eq.~(\ref{eqn:dynsys}). 
By Lemma~\ref{lemma:M2approx} $\{U(k)\}\in\M$, implying that 
$\sum_{k=0}^{\infty}\sum_{i<j}L_{ij}(k)\EXP{(x_i(k)-x_j(k))^2}<\infty$ 
almost surely, where $L(k)=\EXP{U^T(k)U(k)}$. Hence, for any $r\in[\tau]$, 
\[\sum_{k=0}^{\infty}\sum_{{i<j}\atop{i,j\in S_r}}
L_{ij}(k)\EXP{(x_i(k)-x_j(k))^2}<\infty.\]
Due to the diagonal structure of $U(k)$, we have:\\
\noindent
(a) \ $x(k)=(x^{(1)}(k),\cdots,x^{(\tau)}(k))$, where 
$x^{(r)}_i(0)=x_{i_r}(0)$, so $\{x^{(r)}(k)\}$ are the sequences of 
random vectors in $\mathbb{R}^{m_r}$ driven by the individual chains 
$\{U^{(r)}(k)\}$.\\
\noindent
(b) \ For any $i,j\in [m_r]$ and any $r\in[\tau]$, recalling that
$\ell_r=\ell+a_{r-1}$
\begin{align*}
L_{i_rj_r}(k)
=\EXP{\sum_{\ell\in[m]}U_{\ell i_r}(k)U_{\ell j_r}(k)}
=\EXP{\sum_{\ell\in S_r}
U_{\ell i_r}(k)U_{\ell j_r}(k)}
=\EXP{\sum_{\bar{\ell}\in [m_r]}
U^{(r)}_{\bar \ell i}(k)U^{(r)}_{\bar\ell j}(k)}.
\end{align*}
In view of the above observations, the random dynamics in $\Rm$ 
induced by  $\{U(k)\}\in\M$ decomposes into $\tau$ random dynamics 
in $\mathbb{R}^{m_1},\ldots,\mathbb{R}^{m_\tau}$ induced by 
$\{U^{(1)}(k)\},\ldots,\{U^{(\tau)}(k)\}$ all of which belong to class $\M$. 
Each model $\{\tw^{(r)}(k)\}$ has infinite flow property so 
its $\ell_1$-approximation also has infinite flow property. Furthermore, 
as we showed, each model $\{U^{(r)}(k)\}$ has weak feedback property.
Hence, by Theorem~\ref{thm:infflowthmM2}, $\{U^{(r)}(k)\}$ is an ergodic 
chain for any $r\in[\tau]$, which implies that $i\lrA j$ for any 
$i,j\in [m_r]$. Moreover, since $U(k)=\diag(U^{(1)}(k),\ldots,U^{(\tau)}(k))$, 
it follows $i_r\lrA j_r$ in $\{U(k)\}$. 
By approximation lemma (Lemma~\ref{lemma:approximation}), we have 
$i'\lrA j'$ in the original chain $\{W(k)\}$ if and only if 
$i'\lrA j'$ in $\{U(k)\}$, which is in turn true (by the structure of $U(k)$)
if and only if $i',j'\in S_r$ for some $r\in[\tau]$. 
\end{proof}

Theorem~\ref{thrm:infiniteflowgraph} implies that 
the model satisfying the conditions of the theorem is stable.
Furthermore, it shows that the ergodicity classes of such a model
can be fully characterized by considering the connected components in the
infinite flow graph of the model.

Our next result further strengthens Theorem~\ref{thrm:infiniteflowgraph}
by showing that this theorem also applies to an $\ell_1$-approximation of 
a model that satisfies the conditions of the theorem. Furthermore, 
such an $\ell_1$-approximation and its expected model have the same ergodicity 
classes. 

\begin{theorem}\label{thrm:extendedinfflow}
(\textit{Extended Infinite Flow Theorem}) Let an independent random
model $\W$ be an $\ell_1$-approximation of an independent random model 
with a common steady state $\pi>0$ in expectation and weak feedback property. 
Let $\Ginf$ be the infinite flow graph of $\W$ and $\bar{G}^{\infty}$ be 
the infinite flow graph of the expected model $\{\bar W(k)\}$, where 
$\bar W(k)=\EXP{W(k)}$. 
Then, $\W$ is stable almost surely and the following statements are equivalent:
\begin{itemize}
\item[(a)] $i\lrA_W j$.
\item[(b)] $i\lrA_{\bar{W}} j$.
\item[(c)] $i$ and $j$ belong to the same connected component of 
$\bar{G}^{\infty}$.
\item[(d)] $i$ and $j$ belong to the same connected component of $\Ginf$.
\end{itemize}
\end{theorem}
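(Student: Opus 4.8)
The plan is to pick a base model $\{V(k)\}$ — independent, with a common steady state $\pi>0$ in expectation and weak feedback — of which $\W$ is an $\ell_1$-approximation, write $\bar V(k)=\EXP{V(k)}$, and prove the four equivalences by pivoting on the connected components $S_1,\dots,S_\tau$ of $\Ginf$. First I would record two structural facts. (i) All four infinite flow graphs in sight coincide: a finite $\ell_1$-perturbation cannot change the divergence of a flow $\sum_k\bigl(W_{ij}(k)+W_{ji}(k)\bigr)$, so $\Ginf$ equals the infinite flow graph of $\{V(k)\}$; and because $\W$ is independent, for each pair $\{i,j\}$ the sum $\sum_k\bigl(W_{ij}(k)+W_{ji}(k)\bigr)$ of independent $[0,2]$-valued terms diverges almost surely iff $\sum_k\EXP{W_{ij}(k)+W_{ji}(k)}$ diverges — a Borel--Cantelli estimate using $\EXP{e^{-Y}}\le e^{-c\,\EXP{Y}}$ on $[0,2]$ together with monotone convergence — so $\Ginf=\bar{G}^{\infty}$. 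In particular (c)$\Leftrightarrow$(d) is immediate, and the components of $\bar{G}^{\infty}$ are exactly $S_1,\dots,S_\tau$. (ii) The same estimate turns the almost-sure bound $\sum_k|W_{ij}(k)-V_{ij}(k)|<\infty$ into $\sum_k\EXP{|W_{ij}(k)-V_{ij}(k)|}<\infty$ (using that the two chains are adapted to the same sigma-field), so the deterministic chain $\{\bar W(k)\}$ is an $\ell_1$-approximation of $\{\bar V(k)\}$.

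For (a)$\Leftrightarrow$(d) I would argue as follows. Since $\W$ is an $\ell_1$-approximation of $\{V(k)\}$, the Approximation Lemma (Lemma~\ref{lemma:approximation}), applied pathwise and combined with the fact that the ergodicity events are tail events, shows that $\W$ and $\{V(k)\}$ have the same ergodicity classes almost surely; in particular $\{V(k)\}$ is stable (Theorem~\ref{thrm:infiniteflowgraph}), so $\W$ is stable almost surely — this is the stability assertion — and $i\lrA_W j$ iff $i\lrA_V j$. Theorem~\ref{thrm:infiniteflowgraph} applied to $\{V(k)\}$ then gives $i\lrA_V j$ iff $i,j$ lie in the same component of the infinite flow graph of $\{V(k)\}$, which by (i) is $\Ginf$. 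Hence (a)$\Leftrightarrow$(d), and with (i) also (a)$\Leftrightarrow$(c).

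The remaining and most delicate step is (b)$\Leftrightarrow$(c), i.e.\ pinning down the ergodicity classes of the deterministic expected chain $\{\bar W(k)\}$. The naive route — applying Theorem~\ref{thrm:infiniteflowgraph} to $\{\bar W(k)\}$ or $\{\bar V(k)\}$ — fails, and this is the main obstacle: weak feedback is not inherited by an expected chain, since the covariance terms in $\EXP{V^i(k)^TV^j(k)}-\bar V^i(k)^T\bar V^j(k)$ may be negative, so $\{\bar V(k)\}$ need not have weak feedback. To get around this I would reuse the block-diagonal chain $\{U(k)\}=\diag\bigl(U^{(1)}(k),\dots,U^{(\tau)}(k)\bigr)$ constructed from $\{V(k)\}$ and the components $S_1,\dots,S_\tau$ in the proof of Theorem~\ref{thrm:infiniteflowgraph}: there each block $\{U^{(r)}(k)\}$ is an independent model in class $\M$ with weak feedback and with the infinite flow property, and $\{U(k)\}$ is an $\ell_1$-approximation of $\{V(k)\}$. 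Applying Theorem~\ref{thm:infflowthmM2} to each $\{U^{(r)}(k)\}$ gives not only that it is ergodic but, through the model-$\Leftrightarrow$-expected-model equivalence built into that theorem, that its expected chain $\{\bar U^{(r)}(k)\}$ is ergodic as well. Thus $\{\bar U(k)\}$ is block-diagonal with every block ergodic, so its ergodicity classes are exactly $S_1,\dots,S_\tau$: within a block mutual ergodicity holds, while for indices in different blocks the indicator vector of one block is fixed by $\{\bar U(k)\}$ and exhibits two indices that are not mutually weakly ergodic.

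To finish, by (ii) both $\{\bar W(k)\}$ and $\{\bar U(k)\}$ are deterministic $\ell_1$-approximations of $\{\bar V(k)\}$, hence of each other, so the Approximation Lemma gives them the same ergodicity classes. Therefore $i\lrA_{\bar W}j$ iff $i,j\in S_r$ for some $r$, i.e.\ iff $i$ and $j$ lie in the same component of $\bar{G}^{\infty}$ by (i); this is (b)$\Leftrightarrow$(c) and closes the cycle (a)$\Leftrightarrow$(b)$\Leftrightarrow$(c)$\Leftrightarrow$(d). The one point demanding care throughout is the passage between almost-sure $\ell_1$-smallness and $\ell_1$-smallness in expectation in facts (i) and (ii), which is where independence and the common sigma-field of the two chains are essential.
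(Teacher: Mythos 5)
Your overall plan is sound, and on two of the three blocks you actually supply details the paper leaves implicit. Your fact~(i) is the paper's step (c)~$\Rightarrow$~(d): where the paper cites Kolmogorov's three-series theorem, you use the exponential-moment bound $\EXP{e^{-Y}}\le e^{-c\,\EXP{Y}}$; both prove the same two-sided equivalence between a.s.\ divergence and divergence in expectation for $\W$ alone. Your handling of (a)~$\Leftrightarrow$~(d) and of the stability claim --- route everything through the base chain $\{V(k)\}$, apply the Approximation Lemma (Lemma~\ref{lemma:approximation}) pathwise to transfer ergodicity classes, identify $\Ginf$ with the infinite flow graph of $\{V(k)\}$, then invoke Theorem~\ref{thrm:infiniteflowgraph} for $\{V(k)\}$ --- is exactly what the paper's one-line invocation ``by Theorem~\ref{thrm:infiniteflowgraph}, (d) and (a) are equivalent'' must mean, since that theorem does not apply to $\W$ directly. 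Where you genuinely diverge is (b)~$\Leftrightarrow$~(c). The paper never analyzes the ergodicity classes of $\{\bar W(k)\}$ head-on: it closes a cycle (a)~$\Rightarrow$~(b)~$\Rightarrow$~(c)~$\Rightarrow$~(d)~$\Rightarrow$~(a), getting (a)~$\Rightarrow$~(b) from the dominated convergence theorem (independence of $\W$ makes the expected trajectory evolve under $\{\bar W(k)\}$) and (b)~$\Rightarrow$~(c) from Lemma~\ref{lemma:extendedinfinite} applied to the deterministic chain $\{\bar W(k)\}$, with the reverse implications free from the cycle. You instead prove (b)~$\Leftrightarrow$~(c) directly, re-running the block-diagonal construction, extracting ergodicity of the expected blocks from Theorem~\ref{thm:infflowthmM2}, and transferring to $\{\bar W(k)\}$ by an approximation argument at the level of expected chains. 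Your diagnosis that the naive route fails (weak feedback is not inherited by expected chains) is correct, but your cure is heavier than needed --- and it is where the one real gap sits.

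The gap is fact~(ii). To pass from $\sum_k|W_{ij}(k)-V_{ij}(k)|<\infty$ a.s.\ to $\sum_k\EXP{|W_{ij}(k)-V_{ij}(k)|}<\infty$, you need the differences to be independent \emph{across time}, i.e., joint independence of the paired process $\{(W(k),V(k))\}$. The theorem only assumes each chain is separately independent in time; the joint law is unconstrained, and the inference fails without it. Concretely: let $Z,v_0,v_1,\dots$ be mutually independent with $Z$ and all $v_k$ uniform on $[0,1]$; let $V(k)$ be the symmetric $2\times 2$ doubly stochastic matrix with off-diagonal entry $v_k$ (this chain is i.i.d., has steady state $\pi=(\tfrac12,\tfrac12)>0$ and weak feedback), and let $W(k)$ have off-diagonal entry $w_k=v_k+\tfrac12\ (\mathrm{mod}\ 1)$ on the event $\{Z<1/k\}$ and $w_k=v_k$ otherwise. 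Then $\{w_k\}$ is again i.i.d.\ uniform (its conditional law given $Z$ is uniform), $|w_k-v_k|=\tfrac12$ exactly when $Z<1/k$, so the a.s.\ sum is finite, yet $\sum_k\EXP{|w_k-v_k|}=\sum_k\tfrac{1}{2k}=\infty$. (In this example $\bar W(k)=\bar V(k)$, so the statement you are after is not refuted --- but the lemma you use to reach it is.) Hence your proof of (b)~$\Leftrightarrow$~(c) does not go through under the stated hypotheses; the paper's Section~3 remark about chains ``adapted to the same sigma-field'' carries the same implicit joint-independence assumption, so your reading is understandable, but the paper's proof of this theorem never needs it. The clean repair is to discard fact~(ii) and the block-diagonal detour entirely: you already have (a)~$\Leftrightarrow$~(d) and (c)~$\Leftrightarrow$~(d), so adding the two cheap links (a)~$\Rightarrow$~(b) (dominated convergence) and (b)~$\Rightarrow$~(c) (Lemma~\ref{lemma:extendedinfinite} applied to $\{\bar W(k)\}$) closes the equivalence using only the independence of $\W$ itself.
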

\begin{proof}
Since $\W$ is independent, (a) implies (b) by the dominated convergence
theorem (\cite{Durrett} page 15). By Lemma~\ref{lemma:extendedinfinite}, 
(b) implies (c). Since $0\leq W_{ij}(k)\leq 1$ and the model is independent, 
by Kolmogorov's three series theorem ([7], page 63), 
$\sum_{k=0}^{\infty}W_{ij}(k)<\infty$ holds a.s. only if 
$\sum_{k=0}^{\infty}\EXP{W_{ij}(k)}<\infty$, so (c) implies (d). 
Finally, by Theorem~\ref{thrm:infiniteflowgraph},
(d) and (a) are equivalent.
\end{proof}

By Theorem~\ref{thrm:extendedinfflow}, we have that any dynamics driven
by a random model satisfying the assumptions of the theorem 
converges almost surely. This, however, need not be true 
if either $\pi>0$ or week feedback assumption 
of the theorem is violated, as seen in the following examples. 

\begin{example}
Let matrices $W(k)$ be given by
\[W(k)=\left[\begin{array}{ccc}
1&0&0\\
u_1(k)&u_2(k)&u_3(k)\\
0&0&1
\end{array}\right],\] 
where $u(k)=(u_1(k),u_2(k),u_3(k))^T$ are i.i.d.\ random 
vectors distributed uniformly in the probability simplex of $\R^3$. Then, 
starting from the point $x(0)=(0,\frac{1}{2},1)^T$, the dynamics will not 
converge. This model has infinite flow property and satisfies all assumptions 
of Theorem~\ref{thrm:extendedinfflow} except for the assumption $\pi>0$. 
\end{example}

\begin{example}
Consider the random permutation model. Specifically, let $W(k)$ 
be the i.i.d.\ model with $W(k)$ randomly and uniformly chosen from the set 
of permutation matrices in $\Rm$. 
Starting from any initial point, this model just permutes the coordinates of 
the initial point. Therefore, the dynamic is not converging for any 
$x(0)$ that lies outside the subspace spanned by the vector $e$.
The model has infinite flow property and has the common steady state 
$\pi=\frac{1}{m}e$ in expectation. However, the model does not have 
weak feedback property, since $\EXP{W^i(k)^TW^j(k)}=0$ for $i\ne j$ while 
$\EXP{W_{ij}(k)}+\EXP{W_{ji}(k)}>0$. 
\end{example}

\section{Applications}\label{sec:applications}
Here, we consider some applications of 
Theorem~\ref{thm:infflowthm} and its extended variant to ergodicity classes
in Theorem~\ref{thrm:extendedinfflow}. 
First, we discuss the broadcast-gossip model for a time-varying network 
and, then, we consider a link failure process on 
random networks. 

\subsection{Broadcast Gossip Algorithm on Time-Changing Networks}
\label{sec:broadcast}
Broadcast gossip algorithm has been presented and analyzed 
in~\cite{Aysal08,Aysal09} for consensus over a static network. Here, 
we propose broadcast gossip algorithm for time-varying networks and provide a
necessary and sufficient condition for ergodicity. 
Suppose that we have a network with $m$ nodes and a sequence of simple 
undirected graphs $\{G(k)\}$, where $G(k)=([m],\E(k))$ and  $\E(k)$ 
represents the topology of the network at time $k$. The sequence
$\{G(k)\}$  is assumed to be deterministic. 
Suppose that at time $k$, agent $i\in[m]$ wakes up with probability 
$\frac{1}{m}$ (independently of the past) and 
broadcasts its value to its neighboring agents 
$N_i(k)=\{j\in[m]\mid \{i,j\}\in \E(k)\}$. At this time, each agent 
$j\in N_i(k)$ updates its estimate as follows:
\begin{align}\nonumber
x_j(k+1)=\g(k)x_i(k)+(1-\g(k))x_j(k),
\end{align}
where $\g(k)\in(0,\gamma]$ is a mixing parameter of the system at time $k$ 
and $\gamma\in (0,1)$. The other agents keep their values unchanged, i.e., 
$x_j(k+1)=x_j(k)$ for $j\not\in N_i(k)$.
Therefore, in this case the vector $x(k)$ of agents' estimates $x_i(k)$
evolves in time according to~\eqref{eqn:dynsys} where
\begin{align}\label{eqn:broadcastgossip}
W(k)=I-\g(k)\sum_{j\in N_{i}(k)}e_j(e_j-e_i)^T\qquad
\mbox{with probability $\frac{1}{m}$.}
\end{align}

Let $G_b^\infty$ be the infinite flow graph of the broadcast gossip model, 
and suppose that this graph has $\tau$ connected components, namely 
$S_1,\ldots,S_{\tau}$.
Using Theorem~\ref{thrm:extendedinfflow}, we have the following result.

\begin{lemma}\label{lemma:broadcastgossip}
The time-varying broadcast gossip model of~\eqref{eqn:broadcastgossip} is 
stable almost surely. Furthermore, any two 
agents are in the same ergodicity class if and only if they belong to 
the same connected component of $G_b^{\infty}$. In 
particular, the model is ergodic if and only if $G_r^{\infty}$ is connected.
\end{lemma}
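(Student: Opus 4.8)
The plan is to verify that the broadcast gossip chain $\W$ of~\eqref{eqn:broadcastgossip} satisfies the hypotheses of Theorem~\ref{thrm:extendedinfflow}, viewing $\W$ as an $\ell_1$-approximation of itself (the defining condition $\sum_{k=0}^{\infty}|W_{ij}(k)-W_{ij}(k)|=0<\infty$ holds trivially). Concretely, I must check three things: that the model is independent, that it has weak feedback property, and that it has a common steady state $\pi>0$ in expectation. Once these are in place, Theorem~\ref{thrm:extendedinfflow} immediately yields the almost sure stability and the equivalence between $i\lrA j$ and $i,j$ lying in the same connected component of $G_b^{\infty}$.

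The first two conditions are straightforward. Independence holds because the waking agent is drawn independently across time. For weak feedback, I would inspect the diagonal of $W(k)$: the waking agent $i$ keeps its value, so $W_{ii}(k)=1$; each neighbor $j\in N_i(k)$ has $W_{jj}(k)=1-\g(k)\ge 1-\gamma$; and every other agent has diagonal entry $1$. Hence $W_{\ell\ell}(k)\ge 1-\gamma>0$ almost surely for all $\ell$ and $k$, so by the remark following Definition~\ref{def:feedbacksteadystate} the model has weak feedback property.

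The main computation is the common steady state, and I claim $\pi=\frac{1}{m}e$ works, i.e.\ that $\EXP{W(k)}$ is doubly stochastic. The cleanest route is to note that $\EXP{W(k)}$ has symmetric off-diagonal entries: for $j\ne \ell$, the entry $W_{j\ell}(k)$ is nonzero only in the realization where agent $\ell$ wakes (probability $\frac{1}{m}$) and $j\in N_\ell(k)$, in which case it equals $\g(k)$; thus $\EXP{W_{j\ell}(k)}=\frac{\g(k)}{m}$ when $\{j,\ell\}\in\E(k)$ and $0$ otherwise. Since the graph $G(k)$ is undirected, the condition $\{j,\ell\}\in\E(k)$ is symmetric in $j$ and $\ell$, so $\EXP{W_{j\ell}(k)}=\EXP{W_{\ell j}(k)}$. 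As $\EXP{W(k)}$ is row stochastic, symmetry of its off-diagonal part forces each column sum to equal the corresponding row sum, hence all column sums equal $1$ and $\EXP{W(k)}$ is doubly stochastic. Therefore $\frac{1}{m}e^T\EXP{W(k)}=\frac{1}{m}e^T$, giving the common steady state $\pi=\frac{1}{m}e>0$ in expectation.

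With all three hypotheses verified, Theorem~\ref{thrm:extendedinfflow} gives that $\W$ is stable almost surely and that $i\lrA j$ if and only if $i$ and $j$ lie in the same connected component of $G_b^{\infty}$, which is the second assertion. For the final ``in particular'' claim, I would use that stability makes the relations $\lra$ and $\lrA$ coincide, so the model is ergodic exactly when its ergodicity pattern is $\{[m]\}$, i.e.\ when every pair $i,j$ satisfies $i\lrA j$; by the equivalence just established this holds precisely when all vertices lie in a single connected component, i.e.\ when $G_b^{\infty}$ is connected. I expect the double-stochasticity verification to be the only real content; everything else is bookkeeping, and the off-diagonal symmetry argument keeps even that short.
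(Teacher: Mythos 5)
Your proposal is correct and follows essentially the same route as the paper's proof: both verify the hypotheses of Theorem~\ref{thrm:extendedinfflow} by showing $\EXP{W(k)}$ is doubly stochastic (the paper via the distributional symmetry of $W_{ij}(k)$ and $W_{ji}(k)$, you via the explicit computation $\EXP{W_{j\ell}(k)}=\frac{\g(k)}{m}$ on edges, which is the same symmetry), and by noting the diagonal entries are bounded below by $1-\gamma>0$ so that weak feedback property holds. Your write-up is in fact slightly cleaner, since the paper's diagonal bound contains a typo ($W_{ii}(k)\geq 1-\g(k)\geq 1-\gamma$, not $\geq\gamma$) and the paper leaves the ``in particular'' ergodicity claim and the trivial self-$\ell_1$-approximation implicit, both of which you spell out.
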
 
\begin{proof}
In view of Theorem~\ref{thrm:extendedinfflow}, it suffices to show that 
the broadcast gossip model has a common steady 
state $\pi>0$ in expectation and weak feedback property.
Since each agent is chosen uniformly at any time instance and 
the graph $G(k)$ is undirected, the (random) entries $W_{ij}(k)$ and 
$W_{ji}(k)$ have the same distribution. Therefore, the expected matrix 
$\EXP{W(k)}$ is a doubly stochastic matrix for any $k\geq 0$. 
Since $\g(k)\leq \gamma<1$, it follows that
$W_{ii}(k)\geq 1-\al(k)\geq\gamma$ for all $i\in[m]$ and all $k\ge0$. 
When a model satisfies $W_{ii}(k)\ge\g>0$ for all $i$ and $k$, 
then the model has weak feedback property with $\frac{\g}{m}$, as implied by 
Lemma 7 in~\cite{ErgodicityPaper}. 
\end{proof} 

The above result shows that no matter how the underlying network evolves 
with the time, when the broadcast gossip algorithm is applied to a time-varying
network the stability of the algorithm is guaranteed.
In fact, we can provide a characterization of the connected 
components $S_r$ for the infinite flow graph $G_b^\infty$. By 
Theorem~\ref{thrm:extendedinfflow}, it suffices to determine the infinite flow 
graph $\bar G_b^\infty$ of the expected model. 
A link $\{i,j\}$ is in the edge-set of the graph 
$\bar G_b^\infty$ if and only if
$\sum_{k=0}^\infty \left(\EXP{W_{ij}(k)} +\EXP{W_{ji}(k)}\right)=\infty$.
By~\eqref{eqn:broadcastgossip}, we have $\EXP{W_{ij}(k)}=\frac{1}{m}\g(k)$
if $j\in N(k)$ and otherwise $\EXP{W_{ij}(k)}=0$.
Thus, $\{i,j\}\in\bar G_b^\infty$ if and only if
$\sum_{k: \{i,j\}\in \E(k)}\,\g(k)=\infty.$

Two instances of the time-varying broadcast gossip algorithm that might be of 
practical interest are: (1) The case when $G(k)=G$ for all $k\geq 0$. 
Then, the random model is ergodic if and only if $G$ is connected and 
$\sum_{k=0}^{\infty}\g(k)=\infty$. (2) The case when 
the sequence $\{\g(k)\}$ is also bounded below i.e., $\g(k)\in[\g_b,\gamma]$ 
with $0<\g_b\leq \gamma<1$. Then, the model is ergodic if and only 
if, in the sequence $\{G(k)\}$, there are infinitely many edges between $S$ 
and $\bar{S}$ for any nonempty $S\subset[m]$.

\subsection{Link Failure Models}\label{sec:failure}
The application in this section is motivated by the work in~\cite{KarMoura07} 
where the ergodicity of a random link failure model has been 
considered.
However, the link failure model in~\cite{KarMoura07} corresponds to just a 
random model in our setting. Here, we assume that we have an 
underlying random model and that there is another random process that models
link failure in the random model. We use $\W$ to denote the underlying random 
model, as in Eq.~\eqref{eqn:dynsys}.
We let $\{F(k)\}$ denote a link failure process, which is independent of the 
underlying model $\W$. Basically, the failure process 
\textit{reduces the information flow between agents} in the underlying 
random model $\W$. For the failure process, we have either
$F_{ij}(k)=0$ or $F_{ij}(k)=1$ for all $i,j\in[m]$ and $k\geq 0$, so that 
$\Fp$ is a binary matrix sequence.
We define the \textit{link-failure model} as 
the random model $\{U(k)\}$ given by
\begin{align}\label{eqn:linkfailed}
U(k)=W(k)\cdot (ee^T-F(k))+\diag([W(k)\cdot F(k)]e),
\end{align}
where ``$\cdot$'' denotes the element-wise product of two matrices.    
To illustrate this model, suppose that we have  a random model 
$\W$ and suppose that each entry $W_{ij}(k)$ is set to zero (fails), 
when $F_{ij}(k)=1$. In this way, $F(k)$ induces 
a failure pattern on $W(k)$. The term $W(k)\cdot (ee^T-F(k))$ in 
Eq.~\eqref{eqn:linkfailed} reflects this effect. Thus, 
$W(k)\cdot (ee^T-F(k))$ does not have some of the entries of $W(k)$.
This lack is compensated by the feedback term which is equal to the sum of 
the failed links, the term 
$\diag([W(k)\cdot F(k)]e)$. This is the same as adding 
$\sum_{j\not= i}[W(k)\cdot F(k)]_{ij}$ to the self-feedback weight $W_{ii}(k)$
of agent $i$ at time $k$ in order to ensure
the stochasticity of $U(k)$. 

Now, let us define \textit{feedback property}.
A random model $\W$ has feedback property if there is $\gamma>0$ such that 
$\EXP{W_{ii}(k)W_{ij}(k)+W_{jj}(k)W_{ji}(k)}
\geq \gamma \EXP{W_{ij}(k)+W_{ji}(k)}$ for any $k\geq 0$ and $i,j\in[m]$ 
with $i\not=j$. In general, this property is stronger than weak feedback 
property, as proved in~\cite{ErgodicityPaper}. 

Our discussion will be focused on a special class of link failure processes,
which are introduced in the following definition. 
 
\begin{definition}\label{def:uniflinkfail} 
A \textit{uniform link-failure process} is a process $\Fp$ such that: 
\begin{enumerate}[(a)]
\item 
The random variables $\{F_{ij}(k)\mid i,j\in[m],\ i\not=j\}$ 
are binary i.i.d.\ for any fixed $k\ge0$. 
\item 
The process $\{F(k)\}$ is an independent process in time. 
\end{enumerate}
\end{definition}

Note that the i.i.d.\  condition in Definition~\ref{def:uniflinkfail} is 
assumed for a fixed time. Therefore, the uniform link-failure model can 
have a time-dependent distribution but for any given time the distribution of 
the link-failure should be identical across the different edges. 

For the uniform-link failure process, we have the following result.

\begin{lemma}\label{lemma:linkfail}
Let $\{W(k)\}$ be an independent model with a common steady state $\pi>0$ in 
expectation and feedback property. Let $\{F(k)\}$ 
be a uniform-link failure process that is independent of $\{W(k)\}$.
Then, the failure model $\{U(k)\}$ is ergodic if and only if 
$\sum_{k=0}^{\infty}(1-p_k)W_{S}(k)=\infty$ for any nonempty $S~\subset[m]$,
where $p_k=\Pr(F_{ij}(k)=1)$. 
\end{lemma}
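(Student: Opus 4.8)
The plan is to show that the link-failure chain $\{U(k)\}$ itself satisfies the three hypotheses of the infinite flow theorem (Theorem~\ref{thm:infflowthm}) and then to read off condition~(c) of that theorem --- infinite flow of the expected chain --- as the stated summability condition. From the definition~\eqref{eqn:linkfailed} I would first record the entrywise form: for $i\neq j$, $U_{ij}(k)=W_{ij}(k)(1-F_{ij}(k))$, while $U_{ii}(k)=W_{ii}(k)+\sum_{\ell\neq i}W_{i\ell}(k)F_{i\ell}(k)$ (any diagonal entries of $F(k)$ cancel). A one-line row-sum check then shows each $U(k)$ is stochastic. Since $U(k)$ is a fixed function of the pair $(W(k),F(k))$, and these pairs are independent across $k$ (as $\{W(k)\}$ and $\{F(k)\}$ are each independent in time and mutually independent), the chain $\{U(k)\}$ is independent.

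For the common steady state I would use independence to write $\EXP{U_{ij}(k)}=(1-p_k)\bar W_{ij}(k)$ for $i\neq j$ and $\EXP{U_{ii}(k)}=\bar W_{ii}(k)+p_k\sum_{\ell\neq i}\bar W_{i\ell}(k)$, where $\bar W(k)=\EXP{W(k)}$. Computing $\sum_i\pi_i\EXP{U_{ij}(k)}$ and substituting $\sum_{i\neq j}\pi_i\bar W_{ij}(k)=\pi_j(1-\bar W_{jj}(k))$ (from $\pi^T\bar W(k)=\pi^T$) together with $\sum_{\ell\neq j}\bar W_{j\ell}(k)=1-\bar W_{jj}(k)$ (stochasticity), the $p_k$-terms cancel and the sum collapses to $\pi_j$. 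Hence $\pi>0$ is a common steady state in expectation for $\{U(k)\}$.

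The crux is weak feedback, and this is where the stronger \emph{feedback} property of $\{W(k)\}$ is used. I would bound the column inner product below by its $\ell=i$ and $\ell=j$ terms and exploit the monotonicity $U_{ii}(k)\geq W_{ii}(k)$, $U_{jj}(k)\geq W_{jj}(k)$ (the compensation only inflates diagonals): $U^i(k)^TU^j(k)\geq W_{ii}(k)W_{ij}(k)(1-F_{ij}(k))+W_{jj}(k)W_{ji}(k)(1-F_{ji}(k))$. Taking expectations, factoring the independent failure terms through $\EXP{1-F_{ij}(k)}=1-p_k$, and invoking the feedback property of $\{W(k)\}$ give $\EXP{U^i(k)^TU^j(k)}\geq\gamma(1-p_k)\EXP{W_{ij}(k)+W_{ji}(k)}=\gamma\,\EXP{U_{ij}(k)+U_{ji}(k)}$, since $\EXP{U_{ij}(k)+U_{ji}(k)}=(1-p_k)\EXP{W_{ij}(k)+W_{ji}(k)}$. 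Thus $\{U(k)\}$ has weak feedback with the same coefficient $\gamma$; the key point is that the $(1-p_k)$ factor appears identically on both sides and cancels, keeping $\gamma$ uniform in $k$.

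With independence, the common steady state $\pi>0$ in expectation, and weak feedback all verified, Theorem~\ref{thm:infflowthm} gives that $\{U(k)\}$ is ergodic if and only if its expected chain has infinite flow property, i.e. $\sum_{k}\sum_{i\in S,j\in\bar S}(\EXP{U_{ij}(k)}+\EXP{U_{ji}(k)})=\infty$ for every nonempty $S\subset[m]$. Since $\EXP{U_{ij}(k)}=(1-p_k)\bar W_{ij}(k)$ off the diagonal, this is exactly $\sum_k(1-p_k)\EXP{W_S(k)}=\infty$. To reach the random condition in the statement, I would note that $\{(1-p_k)W_S(k)\}$ is a sequence of independent, nonnegative, uniformly bounded random variables (as $0\leq W_S(k)\leq m$), so Kolmogorov's three-series theorem (\cite{Durrett}) yields that $\sum_k(1-p_k)W_S(k)=\infty$ almost surely exactly when $\sum_k(1-p_k)\EXP{W_S(k)}=\infty$; the event is therefore $0$--$1$, and chaining this with the preceding equivalences establishes the lemma. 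The one genuinely delicate step is the weak-feedback estimate: one must verify that the failure operation carries enough diagonal feedback through, and the clean cancellation of $1-p_k$ is the non-obvious ingredient that makes the feedback coefficient independent of $k$.
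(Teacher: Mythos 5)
Your proposal is correct and follows essentially the same route as the paper: verify that $\{U(k)\}$ is independent, has $\pi>0$ as a common steady state in expectation (the paper packages this as $\EXP{U(k)}=p_kI+(1-p_k)\EXP{W(k)}$), and inherits feedback with the same coefficient $\gamma$ because the factor $(1-p_k)$ appears on both sides and cancels, then apply Theorem~\ref{thm:infflowthm} and identify $\EXP{U_S(k)}=(1-p_k)\EXP{W_S(k)}$. Your closing application of Kolmogorov's three-series theorem, passing between $\sum_{k}(1-p_k)\EXP{W_S(k)}=\infty$ and almost-sure divergence of $\sum_{k}(1-p_k)W_S(k)$, is a step the paper's proof actually omits (it stops at the expectation form, which matches the lemma statement only modulo this equivalence), so your version is slightly more complete on that point.
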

\begin{proof}
By the definition of $\{U(k)\}$ in~\eqref{eqn:linkfailed},
the failure model $\{U(k)\}$ is also independent 
since both $\{W(k)\}$ and $\{F(k)\}$ are independent.
Then, for $i\not=j$ and for any $k\geq 0$, we have 
\begin{align}\label{eqn:expu}
\EXP{U_{ij}(k)}=\EXP{W_{ij}(k)(1-F_{ij}(k))}=(1-p_k)\EXP{W_{ij}(k)},
\end{align}
where the last equality holds since $W_{ij}(k)$ and $F_{ij}(k)$ 
are independent, and $\EXP{F_{ij}(k)}=p_k$. 
By summing the relations in~\eqref{eqn:expu} over $j\ne i$ for a fixed $i$,
we obtain $\sum_{j\not=i}\EXP{U_{ij}(k)}=(1-p_k)\sum_{j\not=i}\EXP{W_{ij}(k)}$,
which by stochasticity of $W(k)$ implies 
$\sum_{j\not=i}\EXP{U_{ij}(k)}=(1-p_k)(1-\EXP{W_{ii}(k)}).$
Since  $U(k)$ is stochastic, it follows that
\[\EXP{U_{ii(k)}}=1-\sum_{j\not=i}\EXP{U_{ij}(k)}
=p_k+(1-p_k)\EXP{W_{ii}(k)}.\]
From the preceding relation and Eq.~\eqref{eqn:expu},
in matrix notation, the following relation holds:
\begin{align}\label{eqn:expfailedmodel}
\EXP{U(k)}=p_k I+(1-p_k)\EXP{W(k)}\qquad\hbox{for all $k$}.
\end{align}
Since $\pi$ is a common steady state of $\{\EXP{W(k)}\}$, from
Eq.\eqref{eqn:expfailedmodel} we obtain
$\pi^T\EXP{U(k)}=\pi^T$, thus showing that
$\pi>0$ is also a common steady state for $\{U(k)\}$ in expectation.

We next show that $U(k)$ has feedback property. 
By the definition of $U(k)$, 
$U_{ii}(k)\geq W_{ii}(k)$ for all $i\in[m]$ and $k\geq 0$. 
Hence, $\EXP{U_{ii}(k)U_{ij}(k)}\geq \EXP{W_{ii}(k)U_{ij}(k)}$.  
Since $\{F(k)\}$ and $\{W(k)\}$ are independent, we have
\begin{align*}
\EXP{W_{ii}(k)U_{ij}(k)}
&=\EXP{\EXP{W_{ii}(k)U_{ij}(k)\mid F_{ij}(k)=0}}
=\EXP{\EXP{W_{ii}(k)W_{ij}(k)\mid F_{ij}(k)=0}}\cr 
&=(1-p_k)\EXP{W_{ii}(k)W_{ij}(k)}.
\end{align*}
A similar relation holds for $\EXP{U_{jj}(k)U_{ji}(k)}$. 
By the feedback property of $\{W(k)\}$, we have 
\begin{align*}
&\EXP{U_{ii}(k)U_{ij}(k)+U_{jj}(k)U_{ji}(k)}\geq (1-p_k)\gamma 
\EXP{W_{ij}(k)+W_{ji}(k)}=\gamma\EXP{U_{ij}(k)+U_{ji}(k)},\end{align*}
where the last equality follows from Eq.~\eqref{eqn:expfailedmodel},
and $\gamma>0$ is the feedback constant for $\{W(k)\}$.
Thus, $\{U(k)\}$ has feedback property with the same constant 
$\gamma$ as the model $\W$. Hence, the model $\{U(k)\}$  satisfies 
the assumptions of Theorem~\ref{thm:infflowthm}, so 
the model $\{U(k)\}$ is ergodic if and only if 
$\sum_{k=0}^{\infty}\EXP{U_S(k)}=\infty$ for any nontrivial $S\subset[m]$. 
By Eq.~\eqref{eqn:expfailedmodel} we have
$ \EXP{U_S(k)}=(1-p_k)\EXP{W_S(k)}$, implying that  
$\{U(k)\}$ is ergodic if and only if 
$\sum_{k=0}^{\infty}(1-p_k)\EXP{W_S(k)}=\infty$ for any nontrivial 
$S\subset[m]$. 
\end{proof}
Lemma~\ref{lemma:linkfail} shows that 
the severity of a uniform link failure process cannot cause instability 
in the system. When the failure probabilities $p_k$ are bounded away from~1 
uniformly, i.e., $p_k\leq \bar p$ for all $k$ and some $\bar p<1$, 
it can be seen that $\sum_{k=0}^{\infty}(1-p_k)\EXP{W_S(k)}=\infty$ 
if and only if $\sum_{k=0}^{\infty}\EXP{W_S(k)}{=\infty}$. In this case, 
by Lemma~\ref{lemma:linkfail} the following result is valid: the failure model 
$\{U(k)\}$ is ergodic if and only if the original model $\{W(k)\}$ is ergodic.

\section{Conclusion}\label{sec:conclusion}
In this paper, we have studied the limiting behavior 
of time-varying dynamics driven by random stochastic matrices. We have 
introduced the concept of $\ell_1$-approximation of a chain and have shown
that, for certain chains, such approximations preserve the limiting behavior 
of the original chains. We have also introduced the class $\M$ of stochastic 
chains to which the infinite flow theorem is applicable, which non-trivially
extends the class of models originally covered by this 
theorem~\cite{ErgodicityPaper}.
Finally, we have identified a certain class of independent random models that 
are stable almost surely. Moreover, we characterized the equilibrium points of 
these models by looking  at their infinite flow graphs. 
Finally, we have applied our main result to a broadcast gossip algorithm over 
a time-varying network and to a link-failure model.
\bibliographystyle{amsplain}
\bibliography{approx}
\end{document}